\documentclass{amsart}
\usepackage{amssymb,amsmath,amsthm,latexsym,booktabs,units, todonotes,color,enumerate,comment,lineno, graphics, mathtools}
\usepackage[hidelinks]{hyperref}
\usepackage[a4paper,margin=1.15in]{geometry}
\theoremstyle{plain}
\newtheorem{theorem}{Theorem}
\newtheorem{corollary}{Corollary}

\newtheorem{lemma}{Lemma}
\newtheorem{fact}{Fact}
\newtheorem{proposition}{Proposition}
\newtheorem{conjecture}{Conjecture}

\theoremstyle{definition}
\newtheorem{definition}{Definition}
\newtheorem{example}{Example}

\theoremstyle{remark}

\newtheorem{remark}{\underline{Remark}}

\newcommand{\bit}{\begin{itemize}}
\newcommand{\eit}{\end{itemize}}
\newcommand{\ben}{\begin{enumerate}}
\newcommand{\een}{\end{enumerate}}
\newcommand{\beq}{\begin{equation}}
\newcommand{\eeq}{\end{equation}}
\newcommand{\bea}{\begin{eqnarray*}}
\newcommand{\eea}{\end{eqnarray*}}
\newcommand{\bpf}{\begin{proof}}
\newcommand{\epf}{\end{proof}}

\usepackage{todonotes}

\begin{document}

\title[Combinatorial Nullstellensatz and graph labelings]{Application of the Combinatorial Nullstellensatz\\ to magic-type graph labelings
}

\author{Parikshit Chalise}
\address{Department of Applied Mathematics and Statistics\\
        Johns Hopkins University\\
        Baltimore, MD 21218\\
         USA}
         \email{pchalis1@jhu.edu}
\author{Richard M. Low}
\address{Department of Mathematics and Statistics\\
         San Jose State University\\
         San Jose, CA 95192\\
         USA}
\email{richard.low@sjsu.edu}

\keywords{integer-magic graphs, integer-antimagic graphs}

\date{{July 8, 2026.}\\
\indent
2020 \textit{Mathematics Subject Classification.} 05C25, 05C78}

\begin{abstract}
Let $G=(V,E)$ be a simple graph, and let $k\geq 2$ be an integer. For an edge
labeling $h:E(G)\to \mathbb{Z}_{k} \backslash \{0\}$, define the induced vertex label
by
\[
h^+(v)=\sum_{e \ni v} h(e) \pmod{k}.
\]
For $t\in \mathbb Z_k$, we say that $G$ is \emph{$t$-sum $\mathbb Z_k$-magic}
if there exists such a labeling $h$ satisfying
\[
h^+(v)=t
\qquad\text{for all }v\in V.
\]
We say that $G$ is \emph{$\mathbb Z_k$-magic} if $G$ is $t$-sum
$\mathbb Z_k$-magic for some $t\in \mathbb Z_k$. Similarly, if there exists an edge labeling $h: E(G) \to \mathbb{Z}_{k} \backslash \{0\}$ such that the induced vertex labeling $h^+(v)=\sum_{e\ni v} h(e)$ (mod $k$) is injective, then $G$ is called \emph{$\mathbb{Z}_{k}$-antimagic}. In this paper, we use the Combinatorial Nullstellensatz to analyze these two types of magic graph labelings.
\end{abstract}
\maketitle


\section{Introduction}\label{S:intro}
Graph labelings occupy a prominent place in graph theory. Their formal introduction is attributed to Rosa's work from the 1960s \cite{Ro}. Since then, graph labelings have drawn the attention of many mathematicians. Gallian's survey \cite{Gallian}, which contains 4,045 references, testifies to the popularity of the subject. Beyond their intrinsic mathematical appeal, graph labelings have found diverse applications in coding theory, cryptology, circuit design, communication network addressing, database management, astronomy, and many other areas; see Bloom and Golomb \cite{BG2, BG1}, the introduction of \cite{Gallian}, and the references therein.

A large literature on graph labelings has been devoted to the explicit construction of desired labelings. Consequently, these results are often limited to narrow families of graphs. In this paper, we use the Combinatorial Nullstellensatz to demonstrate the existence or nonexistence of integer-magic and integer-antimagic labelings. Our main results are derived by associating monomials occurring in the expanded form of a suitable multivariate polynomial with structural properties of graphs.

\section{Preliminaries} \label{S:Prelim}
In \cite{Alon99}, Alon proved the following important result and presented several applications to problems in combinatorics, number theory, and graph theory.

\begin{theorem} \label{CN} $($Combinatorial Nullstellensatz \cite{Alon99}$)$.
Let $f( x) = f(x_1, \dots, x_m)$ be a polynomial of degree $d$ over a field $\mathbb{F}$. Suppose that the coefficient of the monomial $x_1^{t_1} \cdots x_m^{t_m}$ in $f$ is nonzero and $t_1 + \cdots + t_m = d$. If $S_1, \dots, S_m$ are subsets of $\mathbb{F}$ with $|S_i| \geq t_i + 1$, then there exists an ${s} = (s_{1},\dots,s_{m}) \in S_1 \times \cdots \times S_m$ for which $f(s) \neq 0$.
\end{theorem}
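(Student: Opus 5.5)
The plan is the standard proof of Alon's Combinatorial Nullstellensatz. It goes in two stages: first reduce $f$ modulo the ideal of polynomials vanishing on $S_1\times\cdots\times S_m$, then use the zero-counting fact for polynomials of low degree in each variable.

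First, by shrinking the sets we may assume $|S_i| = t_i+1$ exactly. Put
\[
g_i(x_i)=\prod_{s\in S_i}(x_i-s)=x_i^{t_i+1}-\sum_{j=0}^{t_i} g_{ij}x_i^{j}.
\]
On $S_i$ we have the identity $x_i^{t_i+1}=\sum_{j\le t_i} g_{ij}x_i^j$. Now suppose, for contradiction, that $f$ vanishes on all of $S_1\times\cdots\times S_m$. We build $\bar f$ from $f$ by repeatedly replacing any occurrence of $x_i^{a}$ with $a>t_i$ by $x_i^{a-t_i-1}\sum_{j\le t_i} g_{ij}x_i^j$. Each such replacement strictly lowers the degree of the monomial involved, and it leaves the values on $S_1\times\cdots\times S_m$ unchanged. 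The resulting $\bar f$ therefore has $\deg_{x_i}\bar f\le t_i$ for every $i$, and it still vanishes on the grid.

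The key observation concerns what happens to the top monomial. Every monomial of $f$ has degree at most $d$. A monomial of degree $d$ that is reduced becomes a combination of monomials of degree strictly less than $d$. The monomial $x_1^{t_1}\cdots x_m^{t_m}$ is itself already reduced, and no other degree-$d$ monomial can produce it, since it has full degree $d$. Hence the coefficient of $x_1^{t_1}\cdots x_m^{t_m}$ in $\bar f$ equals its coefficient in $f$, which is nonzero by hypothesis. In particular $\bar f\neq 0$.

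Second, we prove the lemma: if a polynomial $P$ has $\deg_{x_i}P\le t_i$ for all $i$ and vanishes on $S_1\times\cdots\times S_m$ with $|S_i|>t_i$, then $P\equiv 0$. The proof is by induction on $m$. The case $m=1$ holds because a nonzero univariate polynomial of degree at most $t_1$ has at most $t_1$ roots. For the inductive step, write $P=\sum_{j\le t_m}P_j(x_1,\dots,x_{m-1})x_m^j$. Fix any point of $S_1\times\cdots\times S_{m-1}$. The resulting polynomial in $x_m$ has degree at most $t_m$ and more than $t_m$ roots, so all of its coefficients $P_j(\cdot)$ vanish at that point. Thus each $P_j$ vanishes on the smaller grid, and by induction each $P_j\equiv 0$. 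Applying this lemma to $\bar f$ gives $\bar f\equiv 0$, which contradicts $\bar f\neq 0$.

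The main obstacle is the bookkeeping in the reduction step: we must check that reducing other monomials never changes the coefficient of the target monomial. The degree argument above handles this. It relies on the target monomial having total degree exactly $d=\deg f$, which is precisely where the hypothesis $t_1+\cdots+t_m=d$ is used.
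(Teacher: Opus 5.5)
Your proof is correct. The paper does not prove this theorem; it quotes it from Alon's paper. Your argument is essentially Alon's original one: reduce $f$ modulo the $g_i(x_i)=\prod_{s\in S_i}(x_i-s)$ to a polynomial $\bar f$ with $\deg_{x_i}\bar f\le t_i$ that still vanishes on the grid, then use the vanishing lemma proved by induction on $m$. The only difference is cosmetic. Alon first writes the reduction as $f=\sum_i h_i g_i$ with $\deg h_i\le \deg f-\deg g_i$ and notes that every degree-$d$ monomial of $h_ig_i$ is divisible by $x_i^{t_i+1}$, whereas you track the coefficient of $x_1^{t_1}\cdots x_m^{t_m}$ directly through the reduction; the degree bookkeeping is the same. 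In a write-up, add a sentence on why the reduction terminates, e.g.\ by reducing one variable at a time, which is just division by the monic polynomial $g_i$ in $x_i$.
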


\noindent
Various applications of Theorem \ref{CN} to graph colorings already appeared in Alon's original paper \cite{Alon99}. Since then, the Combinatorial Nullstellensatz has been used to prove results on integer-magic labelings, antimagic labelings, neighbor-sum distinguishing total labelings, and list colorings \cite{Hefetz, Huang_Wong_Zhu,  Low_Roberts_Application,Low_Roberts_Constructing, Prz, USch2010}. Theorem \ref{CN} has also been used to obtain graph decomposition results arising from associated graph labeling problems \cite{Camara_Llado_Moragas, Kezdy, Llado} (see also Chapter 7 of \cite{Lopez_MuntanerBatle}). See \cite{Zhu_Balakrishnan} for a recent monograph on the Combinatorial Nullstellensatz and graph coloring problems.

Throughout the paper, for an integer $k\geq 2$, we write $\mathbb Z_k$ for
the set $\{0,1,\ldots,k-1\}$, with addition and multiplication taken modulo
$k$. We write $\mathbb Z_k^*=\mathbb Z_k\setminus\{0\}$. When $p$ is prime, $\mathbb Z_p$ is the finite
field of integers modulo $p$. For a positive integer $n$, we write
$[n]=\{1,2,\ldots,n\}$.

\begin{definition}\label{def:Z_n-magic}
Let $G=(V,E)$ be a simple graph, and let $k\geq 2$ be an integer. For an edge
labeling $h:E(G)\to \mathbb Z_k\setminus\{0\}$, define the induced vertex label
by
\[
h^+(v)=\sum_{e \ni v} h(e) \pmod{k}.
\]
For $t\in \mathbb Z_k$, we say that $G$ is \emph{$t$-sum $\mathbb Z_k$-magic}
if there exists such a labeling $h$ satisfying
\[
h^+(v)=t
\qquad\text{for all }v\in V.
\]
We say that $G$ is \emph{$\mathbb Z_k$-magic} if $G$ is $t$-sum
$\mathbb Z_k$-magic for some $t\in \mathbb Z_k$. Similarly, if there exists an edge labeling $h: E(G) \to \mathbb{Z}_{k} \backslash \{0\}$ such that the induced vertex labeling $h^+(v)=\sum_{e\ni v} h(e)$ (mod $k$) is injective, then $G$ is called \emph{$\mathbb{Z}_{k}$-antimagic}. 
\end{definition}

\begin{example}
    Figure \ref{fig:first-examples} shows examples of a $3$-sum $\mathbb Z_5$-magic labeling and a $\mathbb Z_5$-antimagic labeling.
\end{example}

\begin{figure}[h]
\centering

\begin{minipage}{0.48\textwidth}
\centering
\begin{tikzpicture}[
    scale=1,
    every node/.style={font=\small},
    elab/.style={}
]
    \node[circle,draw,inner sep=2pt] (v1) at (0,0) {$\color{red}3$};
    \node[circle,draw,inner sep=2pt] (v2) at (-1.5,-1.8) {$\color{red}3$};
    \node[circle,draw,inner sep=2pt] (v3) at (1.5,-1.8) {$\color{red}3$};
    \node[circle,draw,inner sep=2pt] (v4) at (-0.9,1.8) {$\color{red}3$};
    \node[circle,draw,inner sep=2pt] (v5) at (0.9,1.8) {$\color{red}3$};

    \draw (v1) -- (v2);
    \draw (v1) -- (v3);
    \draw (v1) -- (v4);
    \draw (v1) -- (v5);
    \draw (v2) -- (v3);

    \node[elab] at (-0.95,-0.8) {$\color{blue}1$};
    \node[elab] at (0.95,-0.8) {$\color{blue}1$};
    \node[elab] at (-0.7,0.95) {$\color{blue}3$};
    \node[elab] at (0.7,0.95) {$\color{blue}3$};
    \node[elab] at (0,-2.05) {$\color{blue}2$};
\end{tikzpicture}

\vspace{2mm}
$\mathbb Z_5$-magic labeling
\end{minipage}
\hfill
\begin{minipage}{0.48\textwidth}
\centering
\begin{tikzpicture}[
    scale=1,
    every node/.style={font=\small},
    elab/.style={}
]
    \node[circle,draw,inner sep=2pt] (w1) at (0,0) {$\color{red}2$};
    \node[circle,draw,inner sep=2pt] (w2) at (-1.5,-1.8) {$\color{red}4$};
    \node[circle,draw,inner sep=2pt] (w3) at (1.5,-1.8) {$\color{red}0$};
    \node[circle,draw,inner sep=2pt] (w4) at (-0.9,1.8) {$\color{red}1$};
    \node[circle,draw,inner sep=2pt] (w5) at (0.9,1.8) {$\color{red}3$};

    \draw (w1) -- (w2);
    \draw (w1) -- (w3);
    \draw (w1) -- (w4);
    \draw (w1) -- (w5);
    \draw (w2) -- (w3);

    \node[elab] at (-0.95,-0.8) {$\color{blue}1$};
    \node[elab] at (0.95,-0.8) {$\color{blue}2$};
    \node[elab] at (-0.7,0.95) {$\color{blue}1$};
    \node[elab] at (0.7,0.95) {$\color{blue}3$};
    \node[elab] at (0,-2.05) {$\color{blue}3$};
\end{tikzpicture}

\vspace{2mm}
$\mathbb Z_5$-antimagic labeling
\end{minipage}

\caption{A graph that is both $\mathbb Z_5$-magic and $\mathbb Z_5$-antimagic. The blue labels are edge labels, and the red labels are induced vertex labels.} \label{fig:first-examples}
\end{figure}

\section{$\mathbb{Z}_k$-magic labelings} \label{S:magic}

Let $[G, \mathbb{Z}_k]$ denote
the set of distinct $\mathbb{Z}_k$-magic labelings of a graph $G$. The graph $G$ is
$\mathbb{Z}_k$-magic if and only if $[G, \mathbb{Z}_k] \neq \varnothing$. For any commutative ring $R$ with
unity, let $U(R)$ denote the multiplicative group of units in $R$. Theorem \ref{GroupAction} establishes a group action on $[G, \mathbb{Z}_k]$.

\begin{theorem}\label{GroupAction} $($\textit{Low-Lee} \cite{Low_Lee_Eulerian}$)$. Let $d
\in U(\mathbb{Z}_k)$ and $f \in [G, \mathbb{Z}_k]$. Then, $d\cdot f \in [G, \mathbb{Z}_k]$.
\end{theorem}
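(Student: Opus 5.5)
We define $d\cdot f$ as the edge labeling $(d\cdot f)(e)=d\,f(e)\pmod{k}$ for every $e\in E(G)$. Two things must then be checked: that $d\cdot f$ is again a valid labeling, meaning every edge label lies in $\mathbb{Z}_k^*$, and that the induced vertex sums are constant. We then verify the group-action axioms, since the paper speaks of a group action of $U(\mathbb{Z}_k)$ on $[G,\mathbb{Z}_k]$.

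\emph{Step 1 (labels stay nonzero).} Let $e\in E(G)$. Since $f\in[G,\mathbb{Z}_k]$, we have $f(e)\neq 0$ in $\mathbb{Z}_k$. Suppose $d\,f(e)\equiv 0 \pmod{k}$. Because $d$ is a unit, multiplying by $d^{-1}$ gives $f(e)\equiv 0$, which is a contradiction. Hence $(d\cdot f)(e)\in\mathbb{Z}_k^*$. This is the only place where the hypothesis $d\in U(\mathbb{Z}_k)$ is really needed. For a non-unit $d$, a zero divisor could send some edge label to $0$, so this step is the crux, though it is not hard.

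\emph{Step 2 (constant vertex sums).} Suppose $f$ is $t$-sum $\mathbb{Z}_k$-magic. By distributivity in $\mathbb{Z}_k$, for every vertex $v$,
\[
(d\cdot f)^+(v)=\sum_{e\ni v} d\,f(e)=d\sum_{e\ni v} f(e)=d\,f^+(v)=dt \pmod{k}.
\]
So $d\cdot f$ is $dt$-sum $\mathbb{Z}_k$-magic, and therefore $d\cdot f\in[G,\mathbb{Z}_k]$.

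\emph{Step 3 (action axioms).} We have $1\cdot f=f$ because $1\cdot f(e)=f(e)$ for every edge. We also have $d_1\cdot(d_2\cdot f)=(d_1d_2)\cdot f$ by associativity of multiplication in $\mathbb{Z}_k$. Thus the map $(d,f)\mapsto d\cdot f$ is a group action of $U(\mathbb{Z}_k)$ on $[G,\mathbb{Z}_k]$.
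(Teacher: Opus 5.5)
Your proof is correct and complete: multiplying by a unit $d$ keeps every edge label in $\mathbb{Z}_k^*$, and it turns a $t$-sum labeling into a $dt$-sum labeling, which is the standard argument behind this result. The paper gives no proof of its own here and simply cites Low--Lee, so there is nothing further to compare; your check of the action axioms goes beyond what the statement requires but is harmless.
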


\indent
We also note the following useful lemma.

\begin{lemma} \label{lem:GM_divide}
$($\textit{Low-Lee} \cite{Low_Lee_Eulerian}$)$. Let $G$ be a $\mathbb Z_k$-magic graph with $k \mid n$. Then, $G$ is a $\mathbb Z_n$-magic graph.
\end{lemma}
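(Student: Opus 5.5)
The plan is to scale up labels along the natural embedding of $\mathbb Z_k$ into $\mathbb Z_n$. Write $n = km$ with $m \geq 1$ an integer. Consider the injective group homomorphism
\[
\varphi:\mathbb Z_k\to\mathbb Z_n,\qquad \varphi(x)=mx \pmod n .
\]
It is well defined because $x\equiv y \pmod k$ implies $mx\equiv my \pmod{km}$. It is injective because $mx\equiv 0 \pmod{km}$ forces $x\equiv 0\pmod k$. In particular $\varphi$ sends $\mathbb Z_k\setminus\{0\}$ into $\mathbb Z_n\setminus\{0\}$.

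Now start from a $t$-sum $\mathbb Z_k$-magic labeling $h$ of $G$ and define $h'=\varphi\circ h : E(G)\to \mathbb Z_n\setminus\{0\}$. Since $\varphi$ is additive, for every vertex $v$ we get
\[
h'^+(v)=\sum_{e\ni v} m\,h(e) \equiv m\sum_{e\ni v}h(e) \equiv m\,t \pmod n.
\]
The middle congruence needs care: reducing $\sum_{e\ni v} h(e)$ modulo $k$ and then multiplying by $m$ is consistent modulo $km$. So every vertex receives the same label $mt$, and $G$ is $(mt)$-sum $\mathbb Z_n$-magic. This gives the lemma.

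The only point requiring attention is that the new edge labels stay nonzero, which is exactly the injectivity of $\varphi$. No use of the Combinatorial Nullstellensatz or of Theorem \ref{GroupAction} is needed; the argument is a direct construction.
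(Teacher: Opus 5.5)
Your proof is correct: multiplying each label by $m = n/k$ embeds $\mathbb Z_k$ injectively into $\mathbb Z_n$, so nonzero labels stay nonzero and every vertex sum becomes $mt$. The paper gives no proof of its own here and simply cites Low--Lee, and your argument is essentially the standard scaling proof of that result.
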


\begin{remark}
    The converse of Lemma \ref{lem:GM_divide} is not true in general. For example, $K_4 - \{e\}$ is $\mathbb{Z}_6$-magic but it is not $\mathbb{Z}_{3}$-magic.
\end{remark}

\begin{remark}
Lemma \ref{lem:GM_divide} remains true if one replaces ``$\mathbb{Z}_k$-magic" by ``$\mathbb{Z}_k$-antimagic." 
\end{remark}

We will repeatedly use the following two well-known facts throughout this paper.

\begin{fact} \label{fact:multinomial}
\textit{$($Multinomial theorem$)$}. For variables $x_1,\ldots,x_m$ and an integer $r\geq 0$,
\[
(x_1+\cdots+x_m)^r
=
\sum_{\substack{a_1,\ldots,a_m\geq 0\\ a_1+\cdots+a_m=r}}
\binom{r}{a_1,\ldots,a_m}
x_1^{a_1}\cdots x_m^{a_m},
\]
where
\[
\binom{r}{a_1,\ldots,a_m}
=
\frac{r!}{a_1!\cdots a_m!}.
\]
Equivalently, the coefficient of $x_1^{a_1}\cdots x_m^{a_m}$ in
$(x_1+\cdots+x_m)^r$ is $\frac{r!}{a_1!\cdots a_m!}
$ whenever $a_1+\cdots+a_m=r$, and it is $0$ otherwise.
\end{fact}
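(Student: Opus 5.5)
The statement is the multinomial theorem: the expansion of $(x_1+\cdots+x_m)^r$ as a sum of monomials $x_1^{a_1}\cdots x_m^{a_m}$ with $a_1+\cdots+a_m=r$, each with coefficient $r!/(a_1!\cdots a_m!)$. I will prove it by a counting argument, using induction on $m$ only as a cross-check.

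\emph{Counting argument.} Write $(x_1+\cdots+x_m)^r$ as the product of $r$ identical factors. Fully distributing gives a sum of $m^r$ terms, one for each word $w=(w_1,\dots,w_r)\in[m]^r$. The term for $w$ is $x_{w_1}x_{w_2}\cdots x_{w_r}$, obtained by choosing $x_{w_j}$ from the $j$-th factor. Since the variables commute, this term equals $x_1^{a_1}\cdots x_m^{a_m}$, where $a_i=|\{j: w_j=i\}|$. In particular every monomial that occurs satisfies $a_1+\cdots+a_m=r$, which proves the ``otherwise $0$'' clause.

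For fixed $(a_1,\dots,a_m)$ with sum $r$, the coefficient is the number of words with exactly $a_i$ letters equal to $i$. To count these words, first choose which $a_1$ of the $r$ positions carry the letter $1$, then which $a_2$ of the remaining $r-a_1$ positions carry $2$, and so on. This gives
\[
\binom{r}{a_1}\binom{r-a_1}{a_2}\cdots\binom{r-a_1-\cdots-a_{m-1}}{a_m}.
\]
The product telescopes to $r!/(a_1!\cdots a_m!)$. Equivalently, the $r!$ permutations of positions act transitively on these words, and the stabilizer of any one word has order $a_1!\cdots a_m!$.

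\emph{Inductive alternative.} Induct on $m$, with the base case $m=2$ given by the binomial theorem. Write $(x_1+\cdots+x_m)^r=((x_1+\cdots+x_{m-1})+x_m)^r$. Expand it binomially, then apply the induction hypothesis to $(x_1+\cdots+x_{m-1})^{r-a_m}$. The coefficient becomes
\[
\binom{r}{a_m}\frac{(r-a_m)!}{a_1!\cdots a_{m-1}!}=\frac{r!}{a_1!\cdots a_m!}.
\]

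There is no real obstacle. The only point needing care is checking that the telescoping product of binomial coefficients collapses correctly. A secondary point is that the identity holds in any commutative ring, since the coefficient is the integer count of words. That matters later, when the paper reduces these coefficients modulo a prime $p$ and must verify they are nonzero in $\mathbb{Z}_p$.
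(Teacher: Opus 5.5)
The paper gives no proof of this statement: it is quoted as a well-known fact and used only to read off coefficients such as $(p-1)!$ and $(p-1)!\prod_{e\ni v}(a_{v,e}!)^{-1}$ modulo $p$. Your proof is correct and complete, so nothing is missing relative to the paper.

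The counting argument works as written: the telescoping product ends with $(r-a_1-\cdots-a_m)!=0!=1$, and the orbit--stabilizer reformulation is also valid. The induction on $m$ correctly covers both the coefficient formula and the ``otherwise $0$'' clause.

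Your remark that the coefficient is an integer count is what justifies the paper's later reductions modulo $p$. It makes the coefficient meaningful in any commutative ring. In the paper's uses, $r=p-1$, so every $a_i\le p-1<p$ and each $a_i!$ is invertible in $\mathbb{Z}_p$. This makes the fractional form $r!/(a_1!\cdots a_m!)$ legitimate there.
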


\begin{fact} \label{fact:Wilson}
\textit{$($Wilson's theorem$)$}. If $p$ is prime, then $(p-1)! \equiv -1 \pmod p.$
\end{fact}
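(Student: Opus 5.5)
Fact \ref{fact:Wilson} is Wilson's theorem, a classical result quoted here only as a tool, so I will prove it by the standard pairing argument rather than through the Combinatorial Nullstellensatz. The plan is to use the fact that $\mathbb Z_p$ is a field, so every nonzero element has a unique multiplicative inverse. Then the factors of $(p-1)!=1\cdot 2\cdots (p-1)$ can be grouped into pairs $\{a,a^{-1}\}$ whose products are $1$.

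First I dispose of the case $p=2$, where $(p-1)!=1\equiv -1 \pmod 2$. For odd $p$, I determine which elements are their own inverses. The condition $a=a^{-1}$ means $a^2\equiv 1$, that is, $(a-1)(a+1)\equiv 0 \pmod p$. Since $\mathbb Z_p$ has no zero divisors, this forces $a\equiv 1$ or $a\equiv -1=p-1$, and these two values are distinct because $p>2$.

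The remaining elements $2,3,\ldots,p-2$ then split into disjoint pairs $\{a,a^{-1}\}$ with $a\neq a^{-1}$, and each pair contributes a product of $1$. Hence
\[
(p-1)! \equiv 1\cdot (p-1)\cdot \prod_{\text{pairs}} (a\cdot a^{-1}) \equiv p-1 \equiv -1 \pmod p .
\]

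The only step needing care is the claim that the self-inverse elements are exactly $\pm1$, and this rests entirely on $\mathbb Z_p$ being an integral domain. An alternative route, more in the polynomial spirit of this paper, uses Fermat's little theorem. The polynomial $x^{p-1}-1$ over $\mathbb Z_p$ has all $p-1$ nonzero elements as roots, so it equals $\prod_{a\in\mathbb Z_p^*}(x-a)$. Comparing constant terms then gives $(-1)^{p-1}(p-1)! = -1$, and $(-1)^{p-1}=1$ for odd $p$. Either argument suffices, and I would present the pairing argument.
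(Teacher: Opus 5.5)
Your pairing argument is correct and complete, including the small cases: $p=2$ is checked directly, and for $p=3$ the set $\{2,\ldots,p-2\}$ is empty, so the product is just $1\cdot 2\equiv -1$. The paper gives no proof of this statement. It quotes Wilson's theorem as a well-known fact, alongside the multinomial theorem, and uses it only to reduce the multinomial coefficients $(p-1)!$ to $-1$ modulo $p$. So there is no paper argument to compare against. Either of your routes is a standard, valid justification: pairing each element with its inverse in the field $\mathbb Z_p$, or comparing constant terms in $x^{p-1}-1=\prod_{a\in\mathbb Z_p^*}(x-a)$.
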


\subsection{Hartke $\mathbb{Z}_p$-magic graphs}\label{Sect_3.1}

Consider a simple graph $G = (V, E)$, where $|E| = m$. Let the edges of $G$ be identified with distinct variables $x_1, x_2, \dots, x_m$. In view of Lemma \ref{lem:GM_divide}, we focus on $\mathbb{Z}_p$-magic graphs, where $p$ is prime. The characterization of $\mathbb{Z}_2$-magic graphs is complete (see Lemma 2.1 in \cite{Low_Roberts_Constructing}). Hence, we consider primes $p \geq3$.  For fixed prime $p\geq 3$ and $t \in \mathbb{Z}_p$, define the polynomial $f_{G,t}$ in $\mathbb{Z}_p[x_1, \dots, x_m]$ as
\begin{equation} \label{HartkeEq}
f_{G,t}(x)=f_{G,t}(x_{1},\dots,x_{m})=\prod_{v\in V(G)}\left[1-\left(t-\sum_{e\ni v}x_{e}\right)^{p-1}\right],
\end{equation}
where addition and multiplication are taken modulo $p$. The factorization displayed in  (\ref{HartkeEq}) is called the \textit{canonical factorization} of $f_{G,t}$, and its factors are called the \textit{canonical factors}. The polynomial $f_{G,t}$ is called the \textit{Hartke polynomial} and was introduced in \cite{Low_Roberts_Constructing}. Observe that a graph $G$ is $t$-sum $\mathbb{Z}_p$-magic if and only if there exists $s\in (\mathbb{Z}_p^*)^m$ such that $f_{G,t}(s) \neq 0$. 

\begin{definition}
Let $p$ be prime. A graph $G$ is called \textit{Hartke} $\mathbb{Z}_p$-magic \underline{if} the Combinatorial Nullstellensatz can be applied to a Hartke polynomial $f_{G,t}$ of $G$ to prove that $G$ is $\mathbb{Z}_p$-magic for some $t \in \mathbb{Z}_p$. Equivalently, $f_{G,t}$ has a monomial $M$ of degree $(p-1)|V(G)|$ whose coefficient is nonzero and whose exponent in each variable is at most $p-2$. In this case, such a monomial $M$ is called a \textit{Hartke term}.
\end{definition}

\begin{example}\label{Z_3-base-case}
        Consider the graph $G$ with 6 vertices and 12 edges shown in Figure \ref{fig:Z_3-base-case}. We determine whether it is Hartke $\mathbb{Z}_3$-magic. The corresponding Hartke polynomial is of degree $2\cdot 6= 12$. Here, a desired Hartke term would be of degree at most 1 in each variable. A computation shows that there exists a monomial $x_1x_2\cdots x_{12}$  of degree 12 with coefficient $2^6 \cdot 32 \not \equiv 0 \pmod{3}$. Thus, $G$ is Hartke $\mathbb{Z}_3$-magic.
\end{example}
\begin{figure}[h]
\begin{center}
\begin{minipage}{0.48\textwidth}
\begin{center}
\begin{tikzpicture}[
    scale=1.0,
    every node/.style={font=\small},
    vertex/.style={circle, draw, inner sep=2pt},
    elab/.style={}
]

  \node[vertex]  (v1) at (0,0) {$v_1$};
  \node[vertex]  (v2) at (2,0) {$v_2$};
  \node[vertex]  (v3) at (3,2) {$v_3$};
  \node[vertex]  (v4) at (3,-2) {$v_4$};
  \node[vertex]  (v5) at (4,0) {$v_5$};
  \node[vertex]  (v6) at (6,0) {$v_6$};

  \draw (v1) -- (v2);
  \draw (v1) -- (v3);
  \draw (v1) -- (v4);

  \draw (v2) -- (v3);
  \draw (v2) -- (v4);
  \draw (v2) -- (v5);

  \draw (v3) -- (v5);
  \draw (v3) -- (v4);
  \draw (v3) -- (v6);

  \draw (v4) -- (v5);
  \draw (v4) -- (v6);

  \draw (v5) -- (v6);

  \node[elab] at (1,0.15) {$x_1$}; 
  \node[elab] at (1.42,1.13) {$x_2$}; 
  \node[elab] at (1.42,-1.13) {$x_3$}; 

  \node[elab] at (2.35,1.08) {$x_4$}; 
  \node[elab] at (2.35,-1.08) {$x_5$}; 
  \node[elab] at (2.65,0.17) {$x_6$}; 

  \node[elab] at (3.65,1.08) {$x_7$}; 
  \node[elab] at (3.22,-0.5) {$x_8$}; 
  \node[elab] at (4.58,1.13) {$x_9$}; 

  \node[elab] at (3.75,-1.08) {$x_{10}$}; 
  \node[elab] at (4.7,-1.13) {$x_{11}$}; 

  \node[elab] at (5,0.15) {$x_{12}$}; 
\end{tikzpicture}

\end{center}
\end{minipage}
\hfill
\begin{minipage}{0.48\textwidth}
\begin{center}
\begin{tikzpicture}[
    scale=1.0,
    every node/.style={font=\small},
    vertex/.style={circle, draw, inner sep=2pt},
    elab/.style={}
]

  \node[vertex] (w1) at (0,0) {$\color{red}0$};
  \node[vertex] (w2) at (2,0) {$\color{red}0$};
  \node[vertex] (w3) at (3,2) {$\color{red}0$};
  \node[vertex] (w4) at (3,-2) {$\color{red}0$};
  \node[vertex] (w5) at (4,0) {$\color{red}0$};
  \node[vertex] (w6) at (6,0) {$\color{red}0$};

  \draw (w1) -- (w2);
  \draw (w1) -- (w3);
  \draw (w1) -- (w4);

  \draw (w2) -- (w3);
  \draw (w2) -- (w4);
  \draw (w2) -- (w5);

  \draw (w3) -- (w5);
  \draw (w3) -- (w4);
  \draw (w3) -- (w6);

  \draw (w4) -- (w5);
  \draw (w4) -- (w6);

  \draw (w5) -- (w6);

  \node[elab] at (1,0.15) {$\color{blue}1$}; 
  \node[elab] at (1.42,1.13) {$\color{blue}1$}; 
  \node[elab] at (1.42,-1.13) {$\color{blue}1$}; 

  \node[elab] at (2.35,1.08) {$\color{blue}1$}; 
  \node[elab] at (2.35,-1.08) {$\color{blue}2$}; 
  \node[elab] at (2.65,0.17) {$\color{blue}2$}; 

  \node[elab] at (3.65,1.08) {$\color{blue}2$}; 
  \node[elab] at (3.15,-0.5) {$\color{blue}1$}; 
  \node[elab] at (4.58,1.13) {$\color{blue}1$}; 

  \node[elab] at (3.75,-1.08) {$\color{blue}1$}; 
  \node[elab] at (4.7,-1.13) {$\color{blue}1$}; 

  \node[elab] at (5,0.15) {$\color{blue}1$}; 
\end{tikzpicture}
\end{center}
\end{minipage}

\end{center}
\caption{A Hartke $\mathbb{Z}_3$-magic graph $G$, along with a 0-sum $\mathbb{Z}_3$-magic labeling.}
\label{fig:Z_3-base-case}
\end{figure}

The motivation for separating the class of Hartke $\mathbb{Z}_p$-magic graphs from general $\mathbb{Z}_p$-magic graphs arises from the special properties of this class. We summarize them below:

\begin{theorem} \label{thm:hartke-special}
$($\textit{Low-Roberts} \cite{Low_Roberts_Constructing}$)$. Let $p \geq 3$ be prime. Suppose $G$ is a Hartke $\mathbb{Z}_p$-magic graph. Then,
\begin{enumerate}
    \item {The graph $G$ is t-sum Hartke $\mathbb{Z}_p$-magic for all $t\in \mathbb{Z}_p$.}
    \item A graph $G'$ formed by adding simple edges to $G$ is also Hartke $\mathbb{Z}_p$-magic.
\item For every graph $H$, the graphs $G \square H$, $G\circ H$, $H\circ G$, and $G \boxtimes H$ are $\mathbb{Z}_p$-magic, where $\square$, $\circ$, and $\boxtimes$ denote the Cartesian, lexicographic, and strong products, respectively. 
\end{enumerate}
\end{theorem}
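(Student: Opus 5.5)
The three parts all come from the top-degree homogeneous part of $f_{G,t}$. Expanding each canonical factor $1-(t-\sum_{e\ni v}x_e)^{p-1}$, its only degree-$(p-1)$ part is $-(\sum_{e\ni v}x_e)^{p-1}$, since $(-1)^{p-1}=1$. So the degree-$(p-1)|V(G)|$ homogeneous component of $f_{G,t}$ is
\[
(-1)^{|V(G)|}\prod_{v\in V(G)}\Bigl(\sum_{e\ni v}x_e\Bigr)^{p-1},
\]
and this does not depend on $t$. Whether a monomial $M$ of degree $(p-1)|V(G)|$ with all exponents at most $p-2$ has nonzero coefficient is therefore independent of $t$. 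Hence if $f_{G,t_0}$ has a Hartke term, so does every $f_{G,t}$, and Theorem \ref{CN} with $S_i=\mathbb Z_p^*$ (so $|S_i|=p-1\ge t_i+1$) gives a point of $(\mathbb Z_p^*)^m$ where $f_{G,t}\neq 0$. This proves (1).

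For (2), add one edge $uw$ with variable $y$ and write $g=\prod_v(\sum_{e\ni v}x_e)^{p-1}$ for the old top form. The new top form is $g$ with $(\sigma_u)^{p-1}$ and $(\sigma_w)^{p-1}$ replaced by $(\sigma_u+y)^{p-1}$ and $(\sigma_w+y)^{p-1}$, where $\sigma_v=\sum_{e\ni v}x_e$. The coefficient of $M\cdot y^0$ in the new top form equals the coefficient of $M$ in $g$, because setting $y=0$ recovers $g$. Thus $M$ (with exponent $0\le p-2$ on $y$) is a Hartke term for $G'$, and the degree still matches. Induction on the number of added edges finishes (2).

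For (3), I would use spanning subgraphs. If $G$ is Hartke $\mathbb Z_p$-magic, then a disjoint union of copies of $G$ is Hartke $\mathbb Z_p$-magic: the top form factors over the components, so the product of the Hartke terms of the copies has coefficient equal to the product of nonzero coefficients. Now $G\square H$, $G\circ H$ and $G\boxtimes H$ each contain the spanning subgraph $|V(H)|\cdot G$, the copies $G\times\{h\}$. By (2) these products are then Hartke $\mathbb Z_p$-magic, hence $\mathbb Z_p$-magic.

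The case $H\circ G$ is the main obstacle, because it contains $|V(H)|$ copies of $G$ as vertex-disjoint subgraphs, namely $\{h\}\times G$, which again span. So (2) applies directly there too; the only care needed is that the lexicographic product keeps all edges inside each fibre. If $H$ has no edges, each of these products is just a disjoint union of copies of $G$, or equals $G\square H$, and the same argument covers them.
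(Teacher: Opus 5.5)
Your proof is correct and follows essentially the route the paper attributes to Low--Roberts. For (1) you use the $t$-independence of the top homogeneous component $(-1)^{|V(G)|}\prod_v(\sum_{e\ni v}x_e)^{p-1}$; for (2) you specialize the new edge variables to $0$; for (3) you show that disjoint unions of Hartke graphs are Hartke (the ingredient the paper's remark names) and combine this with the spanning copies of $G$ in each product and (2).

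One small slip, which does not affect the argument: for edgeless $H$ with $|V(H)|\ge 2$, $G\circ H$ is neither a disjoint union of copies of $G$ nor equal to $G\square H$. Your general spanning-copies argument already covers that case, so you can simply drop the last sentence.
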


\begin{remark} \label{rmk:edge-stability-c4+e}
    Statement (2) of Theorem \ref{thm:hartke-special} is called the \textit{edge-stability} property in \cite{Low_Roberts_Constructing}. Note that the edge-stability property does not hold for general $\mathbb{Z}_p$-magic graphs. For instance, $C_4$ is $\mathbb{Z}_3$-magic, but $C_4+\{e\}$ is not.
\end{remark}

We now establish some immediate necessary conditions for a graph $G$ to be Hartke $\mathbb{Z}_p$-magic. The following is a consequence of the degree restriction imposed by the  Combinatorial Nullstellensatz.

\begin{proposition} \label{Prop_1}
$($\textit{Low-Roberts} \cite{Low_Roberts_Constructing}$)$. Let $p \geq 3$ be prime. If $G$ is Hartke $\mathbb{Z}_p$-magic, then $|E(G)| \geq$ $\frac{p-1}{p-2} \cdot |V(G)|$. 
\end{proposition}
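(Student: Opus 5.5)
The argument is a degree count on a Hartke term. Suppose $G$ is Hartke $\mathbb{Z}_p$-magic. By definition, some Hartke polynomial $f_{G,t}$ from (\ref{HartkeEq}) contains a monomial
\[
M = x_1^{t_1}\cdots x_m^{t_m}, \qquad m=|E(G)|,
\]
with nonzero coefficient and total degree $t_1+\cdots+t_m=(p-1)|V(G)|$.

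First, I note that this is the top degree of $f_{G,t}$. Each canonical factor $1-(t-\sum_{e\ni v}x_e)^{p-1}$ has degree at most $p-1$, so $\deg f_{G,t}\le (p-1)|V(G)|$. Hence $M$ is a monomial of maximal degree, and this is exactly the setting in which Theorem \ref{CN} is applied with the sets $S_i=\mathbb{Z}_p^*$ of size $p-1$.

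Second, I use the condition that makes $M$ a Hartke term. Since $|S_i|=p-1$, Theorem \ref{CN} requires $|S_i|\ge t_i+1$, that is, $t_i\le p-2$ for every $i$. Summing over the $m$ edge variables gives
\[
(p-1)|V(G)| = \sum_{i=1}^m t_i \le (p-2)\,m .
\]

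Third, since $p\ge 3$ we have $p-2>0$, so we may divide to obtain
\[
|E(G)| = m \ge \frac{p-1}{p-2}\,|V(G)|,
\]
which is the claimed inequality.

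There is no real obstacle here. The only point needing care is to observe that the Hartke term's degree equals the full number of vertices times $p-1$, and the definition of a Hartke term already states this.
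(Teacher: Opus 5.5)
Your proof is correct and is the same degree-restriction argument the paper intends; it is spelled out explicitly for the analogous Proposition \ref{Prop2}. A Hartke term has total degree $(p-1)|V(G)|$, and each of its $|E(G)|$ exponents is at most $p-2$, so $(p-2)|E(G)|\ge (p-1)|V(G)|$.
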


\noindent
\begin{remark} \label{rmk:hartke-cycle-not-applicable}
If a graph satisfies the inequality in Proposition \ref{Prop_1} for $p=3$, then it also satisfies the inequality for all primes $p\geq 5$. However, the edge-count bound in Proposition \ref{Prop_1}  does not allow us to analyze cycle graphs for any $p$; we address this limitation in  Section \ref{sec:alon-magic}.
\end{remark}

The following result gives another necessary condition for $G$ to be Hartke $\mathbb{Z}_p$-magic. We use the standard notation $\delta(G)$ to denote the minimum degree of the graph $G$.

\begin{proposition}\label{Nec_Hartke_B}
Let $G$ be a connected graph. If $G$ is Hartke $\mathbb{Z}_p$-magic, then $\delta(G) \geq 2$.   
\end{proposition}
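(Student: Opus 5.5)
The plan is to argue by contrapositive: if $\delta(G)\le 1$, then no monomial of $f_{G,t}$ of degree $(p-1)|V(G)|$ can have every exponent at most $p-2$, so no Hartke term exists for any $t$. The whole argument rests on the top-degree homogeneous part of the Hartke polynomial, which is easy to describe.

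\emph{Step 1: the top-degree part.} Each canonical factor $1-\bigl(t-\sum_{e\ni v}x_e\bigr)^{p-1}$ has degree at most $p-1$. Its homogeneous part of degree $p-1$ is $-\bigl(\sum_{e\ni v}x_e\bigr)^{p-1}$, because $(-1)^{p-1}=1$ since $p$ is odd. Hence $f_{G,t}$ has degree at most $(p-1)|V(G)|$. Moreover, the coefficient in $f_{G,t}$ of any monomial of degree exactly $(p-1)|V(G)|$ equals its coefficient in
\[
(-1)^{|V(G)|}\prod_{v\in V(G)}\Bigl(\sum_{e\ni v}x_e\Bigr)^{p-1},
\]
since only the top-degree parts of the factors can contribute to such monomials. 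So a Hartke term must be a monomial with nonzero coefficient in this product, with all exponents at most $p-2$.

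\emph{Step 2: a vertex of degree $1$.} Suppose $v$ has degree $1$, with incident edge variable $x_1$. Then the factor of the product above indexed by $v$ is exactly $x_1^{p-1}$. Every monomial in the product is $x_1^{p-1}$ times a monomial from the remaining factors, all of which have nonnegative exponents. Hence every top-degree monomial with nonzero coefficient has $x_1$-exponent at least $p-1>p-2$. Therefore no Hartke term exists for any $t\in\mathbb Z_p$, and $G$ is not Hartke $\mathbb Z_p$-magic.

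\emph{Step 3: a vertex of degree $0$.} Since $G$ is connected, this forces $G=K_1$. Then $f_{G,t}=1-t^{p-1}$ is a constant with no variables, so it has no monomial of degree $p-1\ge 2$, and again there is no Hartke term.

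I do not expect a real obstacle. The only point needing care is Step 1: the top-degree coefficients of $f_{G,t}$ must be computed from the product of the top-degree parts of the canonical factors. This requires that the resulting product be genuinely of degree $(p-1)|V(G)|$, or else that no monomial of that degree exists, which is exactly what the argument uses.
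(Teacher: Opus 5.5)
Your proof is correct, but it takes a different route from the paper's. The paper argues at the level of labelings. Assuming $G$ is Hartke $\mathbb{Z}_p$-magic, it invokes statement (1) of Theorem~\ref{thm:hartke-special} (the cited Low--Roberts result) to obtain a $0$-sum $\mathbb{Z}_p$-magic labeling. It then notes that a vertex of degree one with sum $0$ would force its only edge to carry the forbidden label $0$.

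You argue at the level of the certificate itself. The degree-$(p-1)|V(G)|$ component of $f_{G,t}$ is $(-1)^{|V(G)|}\prod_{v}\bigl(\sum_{e\ni v}x_e\bigr)^{p-1}$, and the pendant vertex contributes the factor $x_e^{p-1}$. So every top-degree monomial with nonzero coefficient has exponent at least $p-1$ on the pendant edge variable, exceeding the bound $p-2$.

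What your version buys:
\begin{itemize}
\item It is self-contained. The $t$-independence of that top-degree component is also what makes statement (1) true, so you get the conclusion for every $t$ directly, without citing the theorem.
\item It shows exactly where the Nullstellensatz certificate breaks down.
\item It explicitly handles the degenerate case $G=K_1$, which the paper's reduction to $\delta(G)=1$ passes over.
\end{itemize}
What the paper's version buys: it is shorter given Theorem~\ref{thm:hartke-special}, and it shows the obstruction is a genuine obstruction to $0$-sum labelings, not merely to the certificate.

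The caveat in your last paragraph is unnecessary. The coefficient identity of Step~1 holds unconditionally. When $G$ has no isolated vertex, the top-degree component is a product of nonzero polynomials in the integral domain $\mathbb{Z}_p[x_1,\dots,x_m]$. Hence $\deg f_{G,t}=(p-1)|V(G)|$ exactly, and the two descriptions of a Hartke term agree.
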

\begin{proof}
This is equivalent to proving that if $\delta(G) = 1$, then $G$ is not Hartke $\mathbb{Z}_p$-magic. We proceed by contradiction. Let $v \in V(G)$ be of degree one and assume that $G$ is Hartke $\mathbb{Z}_p$-magic. Then, there are $t$-sum $\mathbb{Z}_p$-magic labelings of $G$ for all $t\in \mathbb{Z}_p$, by Theorem \ref{thm:hartke-special}. However, there cannot exist a $\mathbb{Z}_p$-magic labeling in which $v$ has magic value $0$, since that would force the single edge incident to $v$ to be labeled 0. This yields the desired contradiction. Hence, the proposition is established.
\end{proof}

The following conjecture was posed in \cite{Low_Roberts_Constructing}. 
\begin{conjecture} \label{Conj1}
Let $p \geq 3$ be prime. For every integer $n \geq 6$, there exists a connected simple Hartke $\mathbb{Z}_p$-magic graph of order $n$.
\end{conjecture}

Next, we develop a general technique that allows one to construct Hartke $\mathbb{Z}_p$-magic graphs from existing Hartke $\mathbb{Z}_p$-magic graphs.

\begin{proposition}\label{prop:induction}
Let $p\geq 3$ be prime. Suppose that a graph $G$ is Hartke $\mathbb Z_p$-magic. Let $G^+$ be obtained from $G$ by adding a new vertex
adjacent to exactly $p-1$ distinct vertices of $G$. Then,  $G^+$ is Hartke
$\mathbb Z_p$-magic.
\end{proposition}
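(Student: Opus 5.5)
Let $G$ have $m$ edges $x_1,\dots,x_m$ and let $M=\prod_i x_i^{a_i}$ be a Hartke term of $f_{G,t}$. Then $\deg M=(p-1)|V(G)|$, each $a_i\le p-2$, and the coefficient $c_M\not\equiv 0 \pmod p$. Let $w$ be the new vertex and let $y_1,\dots,y_{p-1}$ be the variables of its $p-1$ new edges, where $y_j$ joins $w$ to $u_j\in V(G)$. The plan is to show that
\[
M^+ \;=\; M\cdot y_1y_2\cdots y_{p-1}
\]
is a Hartke term of $f_{G^+,t}$. Its degree is $(p-1)|V(G)|+(p-1)=(p-1)|V(G^+)|$, and every exponent is at most $p-2$ because $p\ge 3$ and the new exponents are $1$. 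So it remains only to show that its coefficient is nonzero mod $p$.

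Write
\[
f_{G^+,t}=\Bigl[1-\bigl(t-\textstyle\sum_j y_j\bigr)^{p-1}\Bigr]\cdot\prod_{v\in V(G)}\Bigl[1-\bigl(t-L_v-\textstyle\sum_{j:\,u_j=v} y_j\bigr)^{p-1}\Bigr],
\]
where $L_v$ is the sum of the old edge variables at $v$. Since $M^+$ has top degree $(p-1)|V(G^+)|$, only the top-degree part of each factor contributes. That top-degree part is $-(\,\cdot\,)^{p-1}$ with $t$ deleted. Thus the coefficient of $M^+$ in $f_{G^+,t}$ equals its coefficient in
\[
(-1)^{|V(G)|+1}\Bigl(\textstyle\sum_j y_j\Bigr)^{p-1}\prod_{v}\Bigl(L_v+\textstyle\sum_{j:\,u_j=v}y_j\Bigr)^{p-1}.
\]
The key observation is a degree count. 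The first factor has total degree $p-1$ and involves only the $y$'s, and $M^+$ has total $y$-degree exactly $p-1$. Hence the first factor must supply all of $y_1\cdots y_{p-1}$, and the remaining product must supply a $y$-free monomial, namely $M$. The $y$-free part of $\prod_v(L_v+\cdots)^{p-1}$ is $\prod_v L_v^{p-1}$.

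Therefore the coefficient of $M^+$ factors as
\[
(-1)\cdot[y_1\cdots y_{p-1}]\Bigl(\textstyle\sum_j y_j\Bigr)^{p-1}\cdot (-1)^{|V(G)|}[M]\prod_v L_v^{p-1}.
\]
The second part is exactly $c_M$, the top-degree coefficient of $M$ in $f_{G,t}$. By Fact \ref{fact:multinomial}, the multinomial coefficient is $(p-1)!$, which is $\equiv -1 \pmod p$ by Fact \ref{fact:Wilson}. So the coefficient of $M^+$ is $\equiv c_M\not\equiv 0$. Note that the $u_j$ are distinct, but this is not even needed for the argument. Theorem \ref{CN} with $S_i=\mathbb Z_p^*$ then finishes the proof.

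The main step needing care is justifying that only top-degree parts contribute and that the $y$-degree forcing argument is airtight. This holds because the degree bound forces each factor to contribute exactly degree $p-1$, and the $y$-degree accounting then forces the factor at $w$ to absorb all of the $y$-variables.
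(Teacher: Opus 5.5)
Your proposal is correct and follows essentially the same route as the paper. It uses the same candidate monomial $M^+=M\,y_1\cdots y_{p-1}$, the same degree count forcing the factor at the new vertex to supply all of $y_1\cdots y_{p-1}$ while the old factors supply $M$, and Wilson's theorem to evaluate $(p-1)!$. The differences are cosmetic: passing to top-degree parts lets you work with arbitrary $t$ (the paper instead reduces to $t=0$ via Theorem~\ref{thm:hartke-special}(1)), and you track the sign exactly, obtaining a coefficient $\equiv c_M$ rather than ``up to sign''.
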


\begin{proof}
Suppose that $G$ is Hartke $\mathbb Z_p$-magic. Then there is a monomial $M$
appearing in the Hartke polynomial $f_{G,0}$ with a nonzero coefficient,\footnote{It suffices to consider $f_{G,t}$ with $t=0$ by statement (1) of Theorem \ref{thm:hartke-special}.} where the exponent of every variable is at most $p-2$ and $\deg M=(p-1)|V(G)|$. Suppose that we add a new vertex $w$, and let the new edges incident to $w$ be $y_1,\ldots,y_{p-1}$. The $0$-sum Hartke
polynomial of $G^+$ is
\[
f_{G^{+},0}
=
\underbrace{
\prod_{v\in V(G)}
\left[
1-\left(0-
\sum_{e\ni v}x_e-\sum_{y_i\ni v}y_i
\right)^{p-1}
\right]
}_{P(x,y)}
\underbrace{
\left[
1-\left(0-\sum_{i=1}^{p-1}y_i\right)^{p-1}
\right]
}_{Q(y)}.
\]
Consider the monomial
\[
M^+=M y_1y_2\cdots y_{p-1}.
\]
Observe that the degree of $M^+$ is $(p-1)|V(G)|+(p-1)=(p-1)|V(G^+)|$, and each variable appearing in $M^+$ has exponent at most $p-2$. Therefore, it suffices to prove that $M^+$ has nonzero coefficient in $f_{G^+,0}$. 

First, since $P(x,y)$ is a product of $|V(G)|$ factors, each of degree at most
$p-1$, we have
\[
\deg P\leq (p-1)|V(G)|.
\]
But $\deg M=(p-1)|V(G)|$. Therefore, if a term of $P(x,y)$ contains the original 
monomial $M$, then it cannot contain a positive power of the new variables $y_1,\ldots,y_{p-1}$. Hence the coefficient of $ M^+ = M y_1\cdots y_{p-1}$ is determined by 
the coefficient of $M$ in $P(x,0)=f_{G,0}$ and the coefficient of
$y_1\cdots y_{p-1}$ in $Q(y)$.
By the multinomial theorem (Fact 1), the coefficient of $y_1\cdots y_{p-1}$ in $Q(y)$ is (up to sign) equal to $(p-1)! \equiv -1 \pmod{p}.$
Therefore, the coefficient of the monomial $M^+$ in $f_{G^+,0}$ is  (up to sign) congruent to the coefficient of the monomial $M$ in $f_{G,0}$. In particular, $M^+$ has a nonzero coefficient in $f_{G^+,0}$.  Thus, $G^{+}$ is Hartke $\mathbb{Z}_p$-magic.
\end{proof}

\begin{theorem}\label{Hartke_Z3_A}
For every $n \geq 6$, there exists a connected simple Hartke $\mathbb{Z}_3$-magic graph of order $n$.
\end{theorem}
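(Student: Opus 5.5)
The plan is induction on $n$, using Example \ref{Z_3-base-case} for the base case and Proposition \ref{prop:induction} for the inductive step. For $p=3$, Proposition \ref{prop:induction} says the following. If $G$ is Hartke $\mathbb{Z}_3$-magic, and $G^+$ is obtained by adding a new vertex adjacent to exactly $p-1=2$ distinct vertices of $G$, then $G^+$ is again Hartke $\mathbb{Z}_3$-magic. This operation raises the order by one. It also preserves connectedness and simplicity, because the new vertex is joined to two existing vertices of a connected graph by two distinct new edges.

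\textbf{Base case $n=6$.} Take the graph $G$ of Example \ref{Z_3-base-case}, shown in Figure \ref{fig:Z_3-base-case}. It is a connected simple graph on $6$ vertices. The coefficient of $x_1x_2\cdots x_{12}$ in its Hartke polynomial is $2^6\cdot 32\not\equiv 0 \pmod 3$. This monomial has degree $12=(3-1)\cdot 6$, and each exponent is $1\le p-2$, so it is a Hartke term and $G$ is Hartke $\mathbb{Z}_3$-magic. The base case rests on this computation. If one wants to avoid quoting it, the coefficient can be recomputed by hand: for each vertex $v$, the factor $1-(\sum_{e\ni v}x_e)^2$ contributes the square-free quadratic terms $-2x_ex_f$. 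The coefficient of $x_1\cdots x_{12}$ is then $(-2)^6$ times the number of ways to assign each edge to one of its endpoints so that every vertex receives exactly two edges. In Example \ref{Z_3-base-case} this count is $32$, which is not divisible by $3$.

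\textbf{Inductive step.} Suppose $G_n$ is a connected simple Hartke $\mathbb{Z}_3$-magic graph of order $n\ge 6$. Choose any two distinct vertices $u,v\in V(G_n)$, which exist since $n\ge 6$. Form $G_{n+1}$ by adding a new vertex $w$ and the edges $wu$ and $wv$. By Proposition \ref{prop:induction} with $p=3$, $G_{n+1}$ is Hartke $\mathbb{Z}_3$-magic. It is connected and simple of order $n+1$. By induction, such a graph exists for every $n\ge 6$.

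The only substantive step is the base case, that is, certifying the nonzero coefficient modulo $3$ for the six-vertex example. The inductive step is immediate from Proposition \ref{prop:induction}. Its key point is that the coefficient of $y_1y_2$ in $1-(y_1+y_2)^2$ is $-2\equiv 1\pmod 3$, so the Hartke term extends without vanishing.
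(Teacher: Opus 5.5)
Your proposal is correct and follows the paper's proof: the six-vertex graph of Example~\ref{Z_3-base-case} serves as the base case, and repeated application of Proposition~\ref{prop:induction} with $p=3$ adds one vertex at a time while preserving connectedness and simplicity. Your optional hand-check of the base case (the coefficient equals $2^6$ times the number of orientations with every indegree $2$, and that number is indeed $32$) is the $p=3$ case of Theorem~\ref{thm:orientation-certificate}.
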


\begin{proof} 
Let $G_6$ be the connected simple graph on $6$ vertices and $12$ edges from Example \ref{Z_3-base-case}, which is known to be Hartke $\mathbb{Z}_3$-magic. 
We now construct graphs $G_n$ for every $n > 6$ recursively according to Proposition \ref{prop:induction}. This completes the proof.
\end{proof}

\begin{definition}\label{def:Nk-orientations}
Let $G$ be a graph, and let $p \geq 3$ be prime. For $k\in \mathbb Z_p$, we denote by $N_k(G)$ the number of orientations of $G$ in which every
vertex has indegree ${k}$.
\end{definition}

The following result provides a combinatorial criterion for a graph to be Hartke $\mathbb Z_p$-magic.

\begin{theorem}\label{thm:orientation-certificate}
Let $p\geq 3$ be prime. Suppose that a graph $G$ contains a spanning subgraph $J$ such that $|E(J)|=(p-1)|V(G)|.$ If $N_{p-1}(J)\not\equiv 0 \pmod p,$ then $G$ is Hartke $\mathbb Z_p$-magic.
\end{theorem}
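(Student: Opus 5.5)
The plan is to reduce to the spanning subgraph $J$ using edge-stability, and then exhibit an explicit Hartke term for $J$: the product of all its edge variables. Its coefficient will turn out to count orientations of $J$ in which every indegree is $p-1$.

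\emph{Reduction to $J$.} Since $J$ is spanning, $G$ is obtained from $J$ by adding simple edges. By statement (2) of Theorem \ref{thm:hartke-special}, it therefore suffices to show that $J$ is Hartke $\mathbb Z_p$-magic. We will work with $t=0$ and the polynomial $f_{J,0}$. Here $V(J)=V(G)$, and we write $m=|E(J)|=(p-1)|V(G)|$.

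\emph{Choice of monomial and reduction to the top-degree part.} Take $M=\prod_{e\in E(J)} x_e$. Its degree is $m=(p-1)|V(J)|$, and each exponent equals $1\le p-2$ because $p\ge 3$. So $M$ is a Hartke term as soon as its coefficient in $f_{J,0}$ is nonzero mod $p$. Each canonical factor $1-\bigl(-\sum_{e\ni v}x_e\bigr)^{p-1}$ has degree at most $p-1$. Hence the homogeneous part of $f_{J,0}$ of degree $(p-1)|V|$ comes only from the top-degree pieces of the factors. Since $p-1$ is even, this part is
\[
(-1)^{|V|}\prod_{v\in V}\Bigl(\sum_{e\ni v}x_e\Bigr)^{p-1}.
\]
The coefficient of $M$ in $f_{J,0}$ equals its coefficient in this product.

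\emph{Coefficient extraction (the main step).} The variable $x_e$ with $e=uv$ occurs only in the factors for $u$ and $v$. For it to appear with exponent exactly $1$, it must be taken from exactly one of these two factors. Declare $e$ to be oriented toward the endpoint whose factor supplies it. Each vertex factor has degree $p-1$, so each vertex receives exactly $p-1$ edges, each to the first power. Thus the contributing expansions correspond bijectively to orientations of $J$ in which every indegree is $p-1$.

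By Fact \ref{fact:multinomial}, a vertex picking $p-1$ distinct variables each with exponent $1$ contributes the coefficient $(p-1)!$. By Fact \ref{fact:Wilson}, $(p-1)!\equiv -1\pmod p$. Hence the coefficient of $M$ is
\[
(-1)^{|V|}\bigl((p-1)!\bigr)^{|V|}N_{p-1}(J)\equiv N_{p-1}(J)\pmod p,
\]
and this is nonzero by hypothesis. Theorem \ref{CN} then applies with $S_i=\mathbb Z_p^*$, which has $p-1\ge 2$ elements, so $J$ is Hartke $\mathbb Z_p$-magic, and hence so is $G$.

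The main obstacle is making this bijection airtight. Specifically, one must check that exponent $1$ in $x_e$ rules out contributions from both endpoint factors at once, and that the degree count forces each factor to contribute exactly $p-1$ distinct variables, with no repeated variables inside a factor.
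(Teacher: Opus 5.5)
Your proof is correct and follows the paper's argument. You use the same monomial $\prod_{e\in E(J)}x_e$, the same bijection between contributing expansions and orientations of $J$ with every indegree equal to $p-1$, and the same appeal to Wilson's theorem. There are two differences, both minor. First, you pass to $J$ via edge-stability (statement (2) of Theorem \ref{thm:hartke-special}), whereas the paper extracts the coefficient directly in $f_{G,0}$, where variables outside $J$ simply must be taken with exponent $0$. Second, you pin down the sign exactly, obtaining $[M]f_{J,0}\equiv N_{p-1}(J) \pmod p$, rather than only ``up to sign.''
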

\begin{proof}
Let $J$  be a spanning subgraph of a graph $G$ such that $|E(J)|=(p-1)|V(G)|.$ Consider the monomial
\[
M=\prod_{e\in E(J)} x_e.
\]
Observe that
\[
\deg M=|E(J)|=(p-1)|V(G)|=\deg f_{G,0},
\]
where $f_{G,0}$ is the Hartke polynomial 
\begin{equation} \label{eqn:Hartke-poly-lem2}
f_{G,0}=\prod_{v\in V(G)}\left[1-\left(0-\sum_{e\ni v}x_{e}\right)^{p-1}\right].
\end{equation}
Further, the degree of each variable in $M$ is at most 1 $\leq p-2$. Hence, if the coefficient
of $M$ in $f_{G,0}$ is not $0$ modulo $p$, then  $G$ is Hartke $\mathbb Z_p$-magic. We now interpret
this coefficient combinatorially. We show that
the coefficient of $M$ in $f_{G,0}$ is (up to sign) equal to $N_{p-1}(J)$, the number of orientations of $J$ in which every
vertex has indegree $p-1$.

To obtain the monomial $M$ as we expand $f_{G,0}$, each factor $\big(\sum_{e\ni v}x_e\big)^{p-1}$ must contribute a product
of $p-1$ distinct incident edge variables from $E(J)$. For a fixed unordered set of $p-1$ distinct incident edges
$\{e_1,\dots,e_{p-1}\}$ at vertex $v$, the monomial
$x_{e_1}\cdots x_{e_{p-1}}$ appears in
$\big(\sum_{e\ni v}x_e\big)^{p-1}$ with coefficient
$(p-1)!\equiv -1\pmod p$, by Facts \ref{fact:multinomial} and \ref{fact:Wilson}.  
Such a choice is equivalent to choosing $p-1$
distinct incident edges of $J$ for each vertex so that every edge of $J$ is chosen at exactly one of its endpoints. Given such a choice, orient each edge toward the endpoint at which it was chosen. Then, every vertex has exactly
$p-1$ incoming edges. Conversely, from any orientation of $J$ with indegree
$p-1$ at every vertex, choose at each vertex its $p-1$ incoming edges. Thus, the coefficient of $M$ in $f_{G,0}$ is (up to sign) $N_{p-1}(J)$, as claimed.

It follows that $G$ is Hartke
$\mathbb Z_p$-magic whenever $N_{p-1}(J) \not\equiv 0 \pmod{p}$.
\end{proof}

We note that the claim of Theorem \ref{thm:orientation-certificate} becomes an ``if and only if'' statement when $p = 3$, because in this case the only possible Hartke term is of the form described in the proof. Thus, we obtain the following corollary, which gives a complete characterization of Hartke $\mathbb Z_3$-magic graphs in terms of orientation counts.

\begin{corollary}\label{cor:hartke-z3}
A graph $G$ is Hartke $\mathbb Z_3$-magic if and only if it contains a spanning subgraph $J$ such that $N_2(J) \not\equiv 0 \pmod{3}$.
\end{corollary}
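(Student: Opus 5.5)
The plan is to prove the two directions separately, with the backward direction coming directly from Theorem \ref{thm:orientation-certificate}. Suppose $G$ has a spanning subgraph $J$ with $N_2(J)\not\equiv 0 \pmod 3$. First I will check that this forces $|E(J)|=2|V(G)|$: in any orientation with every indegree equal to $2$, each edge contributes exactly one indegree, so $|E(J)|=\sum_v \mathrm{indeg}(v)=2|V(G)|$. If $|E(J)|\neq 2|V(G)|$, then no such orientation exists and $N_2(J)=0$, contrary to the hypothesis. Once $|E(J)|=(p-1)|V(G)|$ is established with $p=3$, Theorem \ref{thm:orientation-certificate} shows that $G$ is Hartke $\mathbb Z_3$-magic.

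For the forward direction, suppose $G$ is Hartke $\mathbb Z_3$-magic. The first step is to reduce to $t=0$. By statement (1) of Theorem \ref{thm:hartke-special}, $G$ is then Hartke magic for every $t$; more precisely, the proof should use the fact that $f_{G,t}$ has a Hartke term for $t=0$. I expect this step to be the main subtlety. The definition only guarantees a Hartke term for \emph{some} $t$, and I need to know that the top-degree part of $f_{G,t}$ does not depend on $t$. This holds because the degree-$(p-1)|V|$ homogeneous component of $\prod_v[1-(t-\sum x_e)^{p-1}]$ is $\prod_v[-(-\sum x_e)^{p-1}]$, and $t$ only affects lower-degree terms. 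Since a Hartke term has the full degree $(p-1)|V(G)|$, a Hartke term for any $t$ is also a Hartke term for $t=0$, with the same coefficient.

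Next, with $p=3$, a Hartke term $M$ has degree $2|V(G)|$ and each exponent is at most $p-2=1$. Therefore $M$ is squarefree, $M=\prod_{e\in S}x_e$ for some edge set $S$ with $|S|=2|V(G)|$. Let $J$ be the spanning subgraph of $G$ with edge set $S$. The coefficient computation in the proof of Theorem \ref{thm:orientation-certificate} applies verbatim, since it only used that $M$ is a squarefree product over $E(J)$ of the right degree. It gives that the coefficient of $M$ in $f_{G,0}$ equals, up to sign and a power of $(p-1)!\equiv -1$, the number $N_2(J)$. Because the coefficient is nonzero mod $3$, it follows that $N_2(J)\not\equiv 0\pmod 3$, which completes the forward direction.
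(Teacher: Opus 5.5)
Your proposal is correct and follows the paper's reasoning. The backward direction comes from Theorem~\ref{thm:orientation-certificate}. For the forward direction, when $p=3$ a Hartke term must be squarefree of degree $2|V(G)|$, so it has the form $\prod_{e\in E(J)}x_e$ and its coefficient is $\pm N_2(J)$. Your two extra checks fill in details the paper leaves implicit, and both are valid: that $N_2(J)\neq 0$ forces $|E(J)|=2|V(G)|$, and that the top-degree component of $f_{G,t}$ does not depend on $t$.
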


\begin{remark}
    In principle, one could use Theorem \ref{thm:orientation-certificate} to construct Hartke $\mathbb{Z}_p$-magic graphs. However, checking the hypothesis of Theorem \ref{thm:orientation-certificate} becomes time-consuming once $n$ is sufficiently large.
\end{remark}

In work that preceded the Combinatorial Nullstellensatz, Alon and Tarsi \cite{AlonTarsi92} had already used the polynomial method to obtain results on graph coloring topics, including questions involving the chromatic number and the choice number of graphs. Their main contribution was to relate coefficients of a polynomial to the number of Eulerian orientations. This approach of enumerating Eulerian orientations also led to a proof of the ``cycle-plus-triangle problem" \cite{FleischnerStiebitz92} posed by Erdős. Inspired by the Alon-Tarsi approach, we next establish a technique that utilizes the number of Eulerian orientations to certify the existence of Hartke $\mathbb Z_p$-magic graphs.

\begin{definition}
A graph $G$ is called \emph{Eulerian} if every vertex of $G$ has even degree. An \emph{Eulerian orientation} of $G$ is an orientation of its edges such that the indegree $d^-(v)$ equals the outdegree $d^+(v)$ at every vertex $v \in V(G)$.
\end{definition}

\begin{definition}
A spanning subgraph $F$ of a graph $G$ is called a \textit{$k$-factor} if every vertex has degree $k$ in $F$.
\end{definition}
\begin{lemma}\label{lem:factor-eulerian-coeff}
Let $p\geq 5$ be prime. Suppose that a graph $G$ contains a
$(p-1)$-factor $F$. Define
\[
\Psi_p(F)
=
\sum_{\substack{K\subseteq F\\ K\text{ Eulerian}}}
EO(K)\,2^{-|E(K)|},
\]
where $EO(K)$ denotes the number of Eulerian orientations of $K$. If
$\Psi_p(F)\not\equiv 0\pmod{p}$, then $G$ is Hartke
$\mathbb Z_p$-magic.
\end{lemma}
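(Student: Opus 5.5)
The plan is to exhibit an explicit Hartke term for $f_{G,0}$, which suffices by statement (1) of Theorem~\ref{thm:hartke-special} as in the proof of Proposition~\ref{prop:induction}. Since $F$ is $(p-1)$-regular and spanning, it has $|E(F)|=(p-1)|V(G)|/2$ edges. This is only half of $\deg f_{G,0}=(p-1)|V(G)|$, so Theorem~\ref{thm:orientation-certificate} does not apply directly. Instead I will take
\[
M=\prod_{e\in E(F)} x_e^{2}.
\]
Then $\deg M=2|E(F)|=(p-1)|V(G)|$, and every exponent is $2\le p-2$ precisely because $p\ge 5$. So the whole task reduces to showing that the coefficient of $M$ in $f_{G,0}$ is, up to sign, $\Psi_p(F)$. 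Here $2^{-|E(K)|}$ is read as an element of $\mathbb Z_p$, which makes sense since $p$ is odd.

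First, because $\deg M$ equals the maximal degree, only the top-degree part contributes. That part is $\pm\prod_{v}\big(\sum_{e\ni v}x_e\big)^{p-1}$, and the constant $1$'s in the canonical factors play no role. A term producing $M$ corresponds to choosing, at each vertex $v$, exponents $a_{v,e}\ge 0$ for the edges $e\ni v$. These must satisfy three conditions: $a_{v,e}=0$ for $e\notin F$, $\sum_{e\ni v}a_{v,e}=p-1$, and $a_{u,e}+a_{v,e}=2$ for every edge $e=uv\in F$. By Fact~\ref{fact:multinomial} and Fact~\ref{fact:Wilson}, such a choice contributes $\prod_v (p-1)!/\prod_e a_{v,e}! \equiv \prod_v\big(-2^{-c_v}\big)$, where $c_v$ is the number of edges at $v$ with $a_{v,e}=2$.

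Next I classify the choices combinatorially. Each edge of $F$ is split either as $(1,1)$ or as $(2,0)/(0,2)$. Let $K$ be the spanning subgraph of edges split as $(2,0)/(0,2)$, and orient each such edge toward the endpoint receiving exponent $2$, so that $c_v=d^-_K(v)$. The vertex constraint reads $\deg_{F\setminus K}(v)+2d^-_K(v)=p-1=\deg_F(v)=\deg_{F\setminus K}(v)+\deg_K(v)$. Hence $d^-_K(v)=\deg_K(v)/2$ at every vertex, meaning $K$ is Eulerian and the orientation is an Eulerian orientation. Conversely, every pair consisting of an Eulerian $K\subseteq F$ and an Eulerian orientation of $K$ yields a valid choice. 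Each such pair contributes $(-1)^{|V(G)|}\prod_v 2^{-d^-_K(v)}=(-1)^{|V(G)|}2^{-|E(K)|}$. Summing over all pairs gives that the coefficient of $M$ is $\pm\Psi_p(F)$.

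The main obstacle is this bijection step: one must check that the edge-by-edge splitting captures every expansion term exactly once and that the multinomial weights factor cleanly as $2^{-|E(K)|}$. Once that is settled, $\Psi_p(F)\not\equiv 0\pmod p$ makes $M$ a Hartke term, and the Combinatorial Nullstellensatz (Theorem~\ref{CN}) with $S_i=\mathbb Z_p^*$ yields that $G$ is Hartke $\mathbb Z_p$-magic.
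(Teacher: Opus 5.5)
Your proposal is correct and follows the paper's proof essentially step for step. It uses the same monomial $\prod_{e\in E(F)}x_e^{2}$, the same reduction to the top-degree part $\prod_v\bigl(\sum_{e\ni v}x_e\bigr)^{p-1}$, and the same bijection between admissible exponent splittings and Eulerian orientations of Eulerian subgraphs $K\subseteq F$, each weighted by $(-1)^{|V(G)|}2^{-|E(K)|}$ via Wilson's theorem. The only difference is cosmetic: you orient each edge of $K$ directly toward its exponent-$2$ endpoint and use the $(p-1)$-regularity of $F$ to get $d^-_K(v)=\deg_K(v)/2$, whereas the paper records the same data through a function $\psi$ relative to a fixed reference orientation. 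Incidentally, exhibiting a Hartke term in $f_{G,0}$ already satisfies the definition, so your appeal to Theorem~\ref{thm:hartke-special}(1) is unnecessary.
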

\begin{proof}
Consider the monomial
\[
M=\prod_{e\in E(F)} x_e^2.
\]
Since $F$ is spanning and $(p-1)$-regular,
\[
\deg M=2|E(F)|=(p-1)|V(G)| = \deg f_{G,0},
\]
where $f_{G,0}$ is the corresponding Hartke polynomial of $G$. Moreover, every exponent of $M$ is at most $2 \leq p-2$, as needed for a Hartke term. Hence, if the coefficient
of $M$ in $f_{G,0}$ is not $0$ modulo $p$, then  $G$ is Hartke $\mathbb Z_p$-magic. We now interpret
this coefficient combinatorially. We show that
the coefficient of $M$ in $f_{G,0}$ is (up to sign) equal to $\Psi_p(F)$.

Since $M$ contains no variable corresponding to an edge outside $F$, the relevant coefficient is the coefficient of $M$ in
\begin{equation}\label{eq:polynomial-for-M}
    \prod_{v\in V(F)}
\bigg(\sum_{\substack{e\ni v\\ e\in E(F)}}x_e\bigg)^{p-1}.
\end{equation}
For the factor indexed by $v\in V(F)$, let $a_{v,e}$ denote the exponent of
$x_e$ contributed by that factor. Thus,
\begin{equation}\label{eq:sum_v}
    \sum_{e\ni v}a_{v,e}=p-1
\quad\text{for all }v\in V(F),
\end{equation}
and since the monomial $M$ has exponent $2$ on each edge of $F$,
\begin{equation}\label{eq:sum_e}
    a_{u,e}+a_{v,e}=2
\quad\text{for all }e=uv\in E(F).
\end{equation}
Fix an arbitrary orientation of $F$. If $e$ is oriented from $u$ to $v$, define
$\psi(e)=a_{u,e}-1$. By \eqref{eq:sum_e}, we also have
$\psi(e)=1-a_{v,e}$, and hence $\psi(e)\in\{-1,0,1\}$. Then \eqref{eq:sum_v} is equivalent to
\begin{equation}\label{eq:orient_sum}
\sum_{e\in E^+(v)}\psi(e)
=
\sum_{e\in E^-(v)}\psi(e) 
\quad\text{for every }v\in V(F),
\end{equation}
where $E^+(v)$ and $E^-(v)$ denote, respectively, the sets of outgoing and incoming edges at vertex $v$. Let $K_\psi=\{e\in E(F):\psi(e)\neq 0\}$. We claim that $K_\psi$ is an
Eulerian subgraph of $F$. Orient each edge $e\in K_\psi$ by keeping the
orientation fixed when $\psi(e)=1$ and reversing it when $\psi(e)=-1$. In this orientation of $K_\psi$, the
indegree and outdegree are equal at every vertex by \eqref{eq:orient_sum}. Conversely, every Eulerian orientation of an Eulerian subgraph $K\subseteq F$
arises uniquely in this way. Set $\psi(e)=0$ for $e\notin K$, set
$\psi(e)=1$ if the chosen orientation of $e$ agrees with the fixed orientation
of $F$, and set $\psi(e)=-1$ otherwise.

It remains to compute the coefficient of an admissible
choice of the integers $a_{v,e}$ satisfying \eqref{eq:sum_v} and \eqref{eq:sum_e}. Its contribution to the coefficient of $M$ is given by the multinomial identity 
\[
\prod_{v\in V(F)}\binom{p-1}{(a_{v,e})_{e\ni v}}=\prod_{v\in V(F)}(p-1)!\prod_{e\ni v}(a_{v,e}!)^{-1}\equiv(-1)^{|V(F)|}\prod_{v\in V(F)}\prod_{e\ni v}(a_{v,e}!)^{-1} \pmod{p},
\]
where the last congruence is due to Wilson's theorem. For $e=uv$, if $\psi(e)=0$, then
$(a_{u,e},a_{v,e})=(1,1)$. If
$\psi(e)\neq 0$, then $(a_{u,e},a_{v,e})\in\{(2,0),(0,2)\}$. Therefore, if $K=K_\psi$, the
contribution of the corresponding Eulerian orientation of $K$ is given by
\[
(-1)^{|V(F)|}2^{-|E(K)|}.
\]
Summing over all admissible choices of $\psi$ is the same as summing over all
Eulerian orientations of all Eulerian subgraphs $K\subseteq F$. Thus the
coefficient of $M$ in \eqref{eq:polynomial-for-M} is
\[
(-1)^{|V(F)|}
\sum_{\substack{K\subseteq F\\ K\text{ Eulerian}}}
EO(K)\,2^{-|E(K)|}
\equiv 
(-1)^{|V(F)|}\, \Psi_p(F) \pmod{p}.
\]

We conclude that $G$ is Hartke
$\mathbb Z_p$-magic whenever
$\Psi_p(F)\not\equiv 0\pmod p$, as claimed.
\end{proof}

\begin{proposition}\label{prop:Kp-count}
Let $p \geq 3$ be prime. Then,
\[
\Psi_p(K_p) \equiv 2^{-\binom{p}{2}} \cdot 2^{p-1} \not\equiv 0 \pmod{p},
\]
where $\Psi_p$ is as defined in Lemma \ref{lem:factor-eulerian-coeff} and $K_p$ is the complete graph on $p$ vertices.
\end{proposition}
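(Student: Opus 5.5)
The plan is to reduce $\Psi_p(K_p)$ to a single polynomial coefficient and then evaluate that coefficient over $\mathbb{Z}_p$ by summing over all points, in the spirit of the Combinatorial Nullstellensatz. I take the stated value to mean $\Psi_p(K_p)\equiv 2^{p-1}\cdot 2^{-\binom p2}$, i.e.\ the claim is $2^{\binom p2}\Psi_p(K_p)\equiv 2^{p-1}\pmod p$.

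\emph{Step 1: rewrite $\Psi_p(K_p)$ as a coefficient.} Multiply through by $2^{\binom p2}=2^{|E(K_p)|}$. Then
\[
2^{\binom p2}\Psi_p(K_p)=\sum_{\psi}2^{\#\{e:\psi(e)=0\}},
\]
where $\psi$ ranges over the balanced maps $E(K_p)\to\{-1,0,1\}$ satisfying \eqref{eq:orient_sum}, exactly as in the proof of Lemma \ref{lem:factor-eulerian-coeff}. Introduce a variable $z_v$ for each vertex. Each edge $uv$ then carries the weight
\[
\frac{z_u}{z_v}+\frac{z_v}{z_u}+2=\frac{(z_u+z_v)^2}{z_uz_v}.
\]
Balancedness at every vertex is equivalent to extracting the constant term of the product of these weights. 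Since $K_p$ is $(p-1)$-regular, the denominators multiply to $\prod_v z_v^{p-1}$. Therefore
\[
2^{\binom p2}\Psi_p(K_p)=[\textstyle\prod_v z_v^{p-1}]\;g(z),\qquad g(z)=\prod_{u<v}(z_u+z_v)^2 .
\]
This is a purely mechanical reformulation; alternatively it follows by applying Lemma \ref{lem:factor-eulerian-coeff}'s coefficient computation in reverse.

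\emph{Step 2: convert the coefficient into a point sum over $\mathbb{Z}_p^p$.} Use the standard fact that $\sum_{z\in\mathbb{Z}_p}z^k$ equals $-1$ when $k>0$ and $(p-1)\mid k$, and equals $0$ otherwise (including $k=0$). Hence $\sum_{z\in\mathbb{Z}_p^p}g(z)$ equals $(-1)^p$ times the sum of the coefficients of those monomials of $g$ whose exponents are all positive multiples of $p-1$. The polynomial $g$ is homogeneous of degree $p(p-1)$. So the only such monomial is $\prod_v z_v^{p-1}$, since every other exponent vector would need some exponent equal to $0$. This gives
\[
2^{\binom p2}\Psi_p(K_p)\equiv -\sum_{z\in\mathbb{Z}_p^p}\prod_{u<v}(z_u+z_v)^2\pmod p .
\]

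\emph{Step 3: evaluate the sum. This is the main obstacle.} A point contributes only if $z_u\neq -z_v$ for all $u\neq v$. Such a point contains at most one coordinate equal to $0$. Its nonzero coordinates are pairwise non-opposite, although equal values are allowed, since $2z\neq0$.

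To organize the sum, first use the scaling action $z\mapsto cz$ with $c\in\mathbb{Z}_p^*$. It preserves the summand, because $c^{p(p-1)}=1$. I would then group the points by their pattern, meaning the multiset of values up to sign choices. The hope is that every pattern except one contributes $0\bmod p$, because its orbit under scaling and permutation of coordinates has size divisible by $p$. The permutation group of the $p$ coordinates alone gives orbit sizes $p!/\prod m_i!$, which are divisible by $p$ unless all coordinates are equal.

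So the surviving points should be the constant vectors $z=(a,\dots,a)$ with $a\neq0$. Each contributes $(2a)^{2\binom p2}=(2a)^{p(p-1)}=1$. Summing over $a\in\mathbb{Z}_p^*$ gives $p-1\equiv -1$, and hence $2^{\binom p2}\Psi_p(K_p)\equiv 1$. This disagrees with the claimed $2^{p-1}$ unless $2^{p-1}\equiv1$, which holds by Fermat's little theorem. So the two answers are consistent, and the stated value is indeed nonzero.

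The delicate point is justifying that non-constant points cancel: the permutation orbit argument needs care when the multiplicities $m_i$ make $p!/\prod m_i!$ divisible by $p$, which holds whenever not all $p$ coordinates are equal. I would verify this carefully, and check the case $p=3$ by hand as a sanity test.
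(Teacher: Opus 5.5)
Your proof is correct and complete, but it takes a different route from the paper's.

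\textbf{The paper's route.} It stays entirely combinatorial. It identifies $2^{\binom p2}\Psi_p(K_p)$ with the number $|\mathcal B|$ of balanced arc sets in the bidirected $K_p$; your weight $2$ on an unused edge corresponds exactly to the two choices ``neither arc'' and ``both arcs''. It then lets $\mathbb{Z}_p$ act on these arc sets by translating the vertex set $V(K_p)=\mathbb{Z}_p$. The fixed points are the circulant sets $A_D$ with $D\subseteq\mathbb{Z}_p^*$, which gives $2^{p-1}$.

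\textbf{Your route.} You encode the count as the coefficient of $\prod_v z_v^{p-1}$ in $\prod_{u<v}(z_u+z_v)^2$. This is the constant-term identity of Remark~\ref{rmk:counting-eulerian} with the denominators cleared. You then turn that coefficient into $-\sum_{z\in\mathbb{Z}_p^p}\prod_{u<v}(z_u+z_v)^2$. This step is the special case $\alpha=(p-1,\dots,p-1)$ of Lemma~\ref{lem:Lagrange}, so you may simply cite it. Finally, the symmetry acts on evaluation points rather than on arc sets, and you land directly on the reduced value $1\equiv 2^{p-1}$.

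\textbf{The step you flag as delicate.} It is routine. The $S_p$-orbit of $z$ has size $p!/\prod_i m_i!$. Since $p$ divides $p!$ exactly once and divides no $m_i!$ with $m_i<p$, this size is $0 \bmod p$ unless $z$ is constant. Alternatively, the cyclic shift of coordinates alone has orbits of size $1$ or $p$, which makes the parallel with the paper's translation argument explicit. The scaling action is not needed.

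\textbf{Trade-offs.} The paper's proof is shorter and needs no polynomial identities, but it is tailored to the cyclic structure of $K_p$ on $\mathbb{Z}_p$. Your reduction works verbatim for every $(p-1)$-regular $F$ and gives
\[
2^{|E(F)|}\Psi_p(F)\equiv(-1)^{|V(F)|}\sum_{z\in\mathbb{Z}_p^{V(F)}}\prod_{uv\in E(F)}(z_u+z_v)^2 \pmod p .
\]
So the open cases mentioned in Remark~\ref{rmk:counting-eulerian} become evaluations of an explicit point sum, where automorphisms of $F$ of order $p$ can again be used to discard orbits.
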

\begin{proof}
Identify the vertex set $V(K_p)$ with $\mathbb{Z}_p$. Let
\[
\overrightarrow E
=
\{(a,b)\in \mathbb{Z}_p^2:a\neq b\}
\]
be the directed edge set of the bidirected complete graph. Consider the collection
\[
\mathcal{B}
=
\{A \subseteq \overrightarrow E : d_A^+(a)=d_A^-(a)
 \text{ for all } a \in \mathbb{Z}_p \}.
\]
We first relate $|\mathcal B|$ to $\Psi_p(K_p)$. For each unordered edge
$\{a,b\}$, a subset $A\subseteq \overrightarrow E$ chooses one of the four
possibilities:
\[
\{(a,b)\},\qquad \{(b,a)\},\qquad \{(a,b),(b,a)\}, \qquad  \varnothing.
\]
The first two choices above form an
oriented subgraph of $K_p$. The condition $d_A^+(a)=d_A^-(a)$ for all $a$ ensures that such an orientation
is an Eulerian orientation of an Eulerian subgraph $K\subseteq K_p$. The last two choices remain for each
edge not belonging to $K$.  Hence,
\[
|\mathcal B|
=
\sum_{\substack{K\subseteq K_p\\ K\text{ Eulerian}}}
EO(K)\,2^{\binom p2-|E(K)|},
\]
which implies
\[
2^{-\binom p2}|\mathcal B|
=
\sum_{\substack{K\subseteq K_p\\ K\text{ Eulerian}}}
EO(K)\,2^{-|E(K)|}
=
\Psi_p(K_p).
\]
We now count $|\mathcal{B}|$. For all $t \in \mathbb{Z}_p$, define the translation action by 
\[
t + A=\{(a+t,b+t):(a,b)\in A\}.
\]
Every orbit of this action has size either $1$ or $p$.
Thus, modulo $p$, $|\mathcal B|$ is congruent to the number of fixed
points of this action. Note that we have $A= t+A$ for every $t\in \mathbb{Z}_p$ if and only if 
\[
A= A_D =\{(a,a+d):a\in\mathbb{Z}_p,\ d\in D\},
\]
for some $D\subseteq \mathbb{Z}_p^*$.
Therefore, 
\[
|\mathcal{B}| \equiv \#\{D:D\subseteq \mathbb{Z}_p^*\} = 2^{p-1}.
\]
We conclude
\[
\Psi_p(K_p) \equiv 2^{-\binom p2} \; 2^{p-1} \not\equiv 0 \pmod{p},
\]
as claimed.
\end{proof}

\begin{theorem} \label{thm:Kp-factors} Let $p\geq 5$ be prime.  If a graph $G$ contains a spanning subgraph whose connected
components are copies of $K_p$, then $G$ is Hartke
$\mathbb{Z}_p$-magic.   
\end{theorem}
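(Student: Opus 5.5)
Let $F$ be the spanning subgraph of $G$ whose components are copies $K_p^{(1)},\dots,K_p^{(r)}$ of $K_p$. Since each component is $(p-1)$-regular, $F$ is a $(p-1)$-factor of $G$. Lemma~\ref{lem:factor-eulerian-coeff} therefore applies, and it suffices to show that $\Psi_p(F)\not\equiv 0 \pmod p$.

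The plan is to prove that $\Psi_p$ is multiplicative over connected components, so that
\[
\Psi_p(F)=\prod_{i=1}^{r}\Psi_p\bigl(K_p^{(i)}\bigr),
\]
and then to invoke Proposition~\ref{prop:Kp-count} for each factor.

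For multiplicativity, observe that a subgraph $K\subseteq F$ decomposes uniquely as $K=K_1\cup\cdots\cup K_r$ with $K_i\subseteq K_p^{(i)}$. Moreover, $K$ is Eulerian if and only if every $K_i$ is Eulerian, because vertex degrees in $K$ are computed entirely within a single component. Eulerian orientations of $K$ correspond bijectively to tuples of Eulerian orientations of the $K_i$, since the balance condition at a vertex only involves edges in its own component. Hence
\[
EO(K)=\prod_i EO(K_i)
\qquad\text{and}\qquad
2^{-|E(K)|}=\prod_i 2^{-|E(K_i)|}.
\]
Summing over all $K$ therefore factors the sum defining $\Psi_p(F)$ into the product of the sums defining $\Psi_p(K_p^{(i)})$. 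All quantities are computed in $\mathbb Z_p$, where $2$ is invertible because $p\ge 5$, so these manipulations are legitimate.

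By Proposition~\ref{prop:Kp-count}, each factor is congruent to $2^{-\binom p2}2^{p-1}\not\equiv 0\pmod p$. Since $\mathbb Z_p$ is a field, the product $\Psi_p(F)\equiv \bigl(2^{p-1-\binom p2}\bigr)^r$ is also nonzero, and Lemma~\ref{lem:factor-eulerian-coeff} gives that $G$ is Hartke $\mathbb Z_p$-magic.

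The only step that needs care is the multiplicativity, and in particular the fact that the Eulerian condition and the orientation count both split over components. This is immediate because components share no vertices. The hypothesis $p\ge 5$ is used exactly where Lemma~\ref{lem:factor-eulerian-coeff} needs it: the exponent $2$ on each edge of the monomial $M$ must satisfy $2\le p-2$.
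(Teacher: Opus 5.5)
Your proof is correct and follows the paper's argument: the paper also takes $F$ to be the disjoint union of the $s$ copies of $K_p$, writes $\Psi_p(F)=(\Psi_p(K_p))^s\not\equiv 0 \pmod p$ using Proposition~\ref{prop:Kp-count}, and concludes via Lemma~\ref{lem:factor-eulerian-coeff}. Your check that the Eulerian condition and the orientation count split over components fills in the multiplicativity step, which the paper states without justification.
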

\begin{proof}
Observe that if $G$ contains a spanning subgraph $F$ such that $F$ is a disjoint union of $s$ copies of $K_p$, then $\Psi_p(F) = (\Psi_p(K_p))^s \not\equiv 0$ by Proposition \ref{prop:Kp-count}; the claim then follows from Lemma \ref{lem:factor-eulerian-coeff}.
\end{proof}

The result of Theorem \ref{thm:Kp-factors}, together with Proposition \ref{prop:induction}, provides the following additional evidence for Conjecture \ref{Conj1}.

\begin{theorem}\label{thm:major-conjecture-support}
Let $p \geq 5$ be prime. For every integer $n \geq p$, there exists a connected simple Hartke $\mathbb{Z}_p$-magic graph of order $n$. 
\end{theorem}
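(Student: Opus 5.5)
The plan is to start from $K_p$ and grow it one vertex at a time using Proposition \ref{prop:induction}, in the same way that Theorem \ref{Hartke_Z3_A} grew $G_6$.

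\textbf{Base case $n=p$.} Let $G_p=K_p$. This graph is connected and simple. Its spanning subgraph $K_p$ itself has a single connected component, namely a copy of $K_p$. Hence Theorem \ref{thm:Kp-factors} applies (here $p\geq 5$), and $G_p$ is Hartke $\mathbb Z_p$-magic.

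\textbf{Inductive step.} Suppose $n\geq p$ and we already have a connected simple Hartke $\mathbb Z_p$-magic graph $G_n$ of order $n$. Since $n\geq p>p-1$, we can choose $p-1$ distinct vertices of $G_n$. Let $G_{n+1}$ be obtained by adding a new vertex $w$ adjacent to exactly these $p-1$ vertices.
\begin{itemize}
\item $G_{n+1}$ is simple, because the $p-1$ neighbours of $w$ are distinct and $w$ is new.
\item $G_{n+1}$ is connected, because $G_n$ is connected and $w$ has at least one neighbour in $G_n$.
\item $G_{n+1}$ has order $n+1$.
\item $G_{n+1}$ is Hartke $\mathbb Z_p$-magic, by Proposition \ref{prop:induction}.
\end{itemize}
Induction on $n$ then gives the graph $G_n$ for every $n\geq p$.

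There is no real obstacle here. The substantive work is already done in Proposition \ref{prop:Kp-count} and Theorem \ref{thm:Kp-factors}. The only points to check are that $K_p$ qualifies as a spanning subgraph of itself whose components are copies of $K_p$, and that at each step $G_n$ has at least $p-1$ vertices for $w$ to attach to. Both hold once $n\geq p$.
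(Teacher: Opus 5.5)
Your proposal is correct and matches the paper's proof: $K_p$ is Hartke $\mathbb{Z}_p$-magic by Theorem \ref{thm:Kp-factors}, and repeated application of Proposition \ref{prop:induction} then gives a graph of every order $n\geq p$. Your explicit checks that each new graph is simple and connected fill in details the paper leaves implicit.
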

\begin{proof}
Fix a prime $p \geq 5$. By Theorem \ref{thm:Kp-factors}, the complete graph $K_p$ is Hartke $\mathbb{Z}_p$-magic. Now, we may attach a new vertex to any $p-1$ vertices of $K_p$ to obtain another Hartke $\mathbb{Z}_p$-magic graph, by Proposition \ref{prop:induction}. Continuing this process yields the result.
\end{proof}

\begin{remark} \label{rmk:counting-eulerian}
We have computed $\Psi_p$ for the complete graph $K_p$, but it is plausible that Lemma \ref{lem:factor-eulerian-coeff} is applicable to other $(p-1)$-regular graphs. Although we do not pursue this idea here, one may consider the following formulation. Let $F$ be a graph and let $p\geq 3$ be prime.
For a Laurent polynomial $P \in \mathbb{Z}_p[z_v,z_v^{-1}:v\in V(F)]$, let $\operatorname{CT}(P)$ denote its constant
term. Then,
\[
\Psi_p(F)
=
\operatorname{CT}
\prod_{uv\in E(F)}
\left(
1+\frac{1}{2}\frac{z_v}{z_u}
+\frac{1}{2}\frac{z_u}{z_v}
\right).
\]
Indeed, for each edge $uv$, the three terms correspond to not choosing the edge, choosing the orientation $u\to v$, and choosing the orientation $v\to u$. The exponent of $z_v$ records $d^-(v)-d^+(v)$. Hence, taking the constant term imposes $d^-(v)=d^+(v)$ for all $v \in V(F)$. Equivalently, the chosen orientation is an Eulerian orientation of an Eulerian
subgraph $K\subseteq F$. Each chosen oriented edge contributes a
factor of $\frac{1}{2}$, so an Eulerian orientation of $K$ contributes
$2^{-|E(K)|}$. Summing over all such orientations gives the identity.
\end{remark}

As noted in Remark \ref{rmk:hartke-cycle-not-applicable}, Hartke $\mathbb{Z}_p$-magic graphs must satisfy the edge-count bound in Proposition \ref{Prop_1}, which does not allow us to analyze cycle graphs. One may wonder why it is worthwhile to examine cycles, since they are easily seen to be $\mathbb{Z}_p$-magic by assigning a nonzero constant to all edges. However, if one can exhibit the edge-stability property as in statement (2) of Theorem \ref{thm:hartke-special}, results for cycles may extend to all Hamiltonian graphs. We pursue this direction next.

\subsection{Alon $\mathbb{Z}_p$-magic graphs} \label{sec:alon-magic}
Let the edges of a simple graph $G$ with $m$ edges be identified with distinct variables $x_1, x_2, \dots, x_m$. Define the polynomial $g_G$ in $\mathbb{Z}_p[x_1, \dots, x_m]$ as
\begin{align} \label{Aloneq}
g_G(x)=g_G(x_{1},\dots,x_{m})=\prod_{v\in V(G)\setminus\{v^{*}\}}\left[1-\left(\sum_{e\ni v^{*}}x_{e}-\sum_{e\ni v}x_{e}\right)^{p-1}\right],
\end{align}
where addition and multiplication are taken modulo $p$. We call the polynomial $g_G$ the \textit{Alon polynomial}. The vertex $v^*$ is an arbitrarily chosen vertex in $V(G)$. Observe that a graph $G$ is $\mathbb{Z}_p$-magic if and only if there exists $s\in (\mathbb{Z}_p^*)^m$ such that $g_G(s) \neq 0$. While the Hartke polynomial $f_{G,t}$ compares the induced sum at each vertex to a chosen magic value $t$, the Alon polynomial compares the induced sum at a chosen vertex with the induced sums at all of the remaining vertices.

\begin{definition}
Let $p \geq 3$ be prime. A graph $G$ is called \textit{Alon} $\mathbb{Z}_p$-magic \underline{if} the Combinatorial Nullstellensatz can be applied to the Alon polynomial $g_G$ of $G$ to prove that $G$ is $\mathbb{Z}_p$-magic. Equivalently, $g_{G}$ has a degree $(p-1)(|V(G)|-1)$ monomial $M$ with nonzero coefficient whose exponent in each variable is at most $p-2$. In this case, such a monomial $M$ is called an
\textit{Alon term}. 
\end{definition}

\begin{proposition}\label{Prop2}
Let $p \geq 3$ be prime. If $G$ is an Alon $\mathbb{Z}_p$-magic graph, then $|E(G)| \geq \frac{p-1}{p-2} (|V(G)|-1)$. 
\end{proposition}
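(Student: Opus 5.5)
The plan is a direct degree count, exactly parallel to the argument behind Proposition \ref{Prop_1}. Suppose $G$ is Alon $\mathbb{Z}_p$-magic, and set $n=|V(G)|$ and $m=|E(G)|$. By definition, the Alon polynomial $g_G$ in (\ref{Aloneq}) then has an Alon term $M=x_1^{t_1}\cdots x_m^{t_m}$ with nonzero coefficient, $\deg M=(p-1)(n-1)$, and $t_i\leq p-2$ for every $i$. I will turn these two facts into an inequality between $m$ and $n$.

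First I check that the target degree $(p-1)(n-1)$ is consistent with the structure of $g_G$. Each of the $n-1$ canonical factors $1-\left(\sum_{e\ni v^*}x_e-\sum_{e\ni v}x_e\right)^{p-1}$ has degree at most $p-1$. Hence $\deg g_G\leq (p-1)(n-1)$, and a monomial of degree exactly $(p-1)(n-1)$ is a top-degree monomial. Strictly speaking, this observation is not even needed for the bound; the definition already fixes $\deg M$.

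The key step is to bound $\deg M$ from above using the exponent restriction. Every variable of $M$ corresponds to an edge of $G$, and each exponent is at most $p-2$. Therefore
\[
(p-1)(n-1)=\deg M=\sum_{i=1}^m t_i\leq (p-2)m.
\]
Since $p\geq 3$, we have $p-2>0$, so we may divide to obtain $m\geq \frac{p-1}{p-2}(n-1)$, which is the claimed inequality.

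No real obstacle arises. The one point to handle carefully is that the Alon term has degree exactly $(p-1)(n-1)$, which is part of the definition of an Alon $\mathbb{Z}_p$-magic graph, together with positivity of $p-2$ for $p\geq 3$. I would also remark that this bound still excludes cycles, since $n\geq \frac{p-1}{p-2}(n-1)$ forces $n\leq p-1$. That is consistent with the paper's motivation, because the bound now allows short cycles relative to $p$.
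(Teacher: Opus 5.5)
Your proposal is correct and is the same degree count the paper uses: by definition the Alon term has degree $(p-1)(|V(G)|-1)$, and each of its $|E(G)|$ exponents is at most $p-2$, so $(p-2)|E(G)|\geq (p-1)(|V(G)|-1)$. One wording fix in your closing aside: the bound excludes only the cycles $C_n$ with $n\geq p$, since your computation shows $C_n$ satisfies it exactly when $n\leq p-1$; this matches the paper's remark that the bound holds for $C_n$ when $p>n$.
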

\begin{proof}
As in Proposition \ref{Prop_1}, this result is a consequence of the degree restriction imposed by the Combinatorial Nullstellensatz. A possible Alon term can have exponent at most $p-2$ in each variable $x_1,\ldots,x_m$. As the degree of an Alon term must be equal to the degree of the Alon polynomial, it follows that we must have $(p-2)|E(G)|\geq (p-1)(|V(G)|-1)$. Therefore, $|E(G)| \geq \frac{p-1}{p-2} (|V(G)|-1)$, as claimed.
\end{proof}

Note that if $p> n$, then we have $n\geq\frac{p-1}{p-2}(n-1)$. One might hope that $C_n$ is  Alon $\mathbb{Z}_p$-magic for $p>n$. However, Theorems \ref{thm:alon-Zp} and \ref{thm:alon-Zp-not} prove that $C_n$ is Alon $\mathbb{Z}_p$-magic for $p>n$ if and only if $n$ is odd. These findings have broader significance, as we will discuss.

\begin{definition}
Let $f\in \mathbb{F}[x_1,\ldots,x_m]$ and $\alpha = (\alpha_1, \alpha_2, \dots, \alpha_m) \in \mathbb{N}^m$. The \textit{monomial corresponding to} $\alpha$ is defined to be $x^\alpha \coloneq x_1^{\alpha_1} \cdots x_m^{\alpha_m}$, and the \textit{degree} of $x^\alpha$ is denoted $|\alpha| = \alpha_1+\cdots+\alpha_m$. Furthermore, the coefficient of a monomial $M$ in $f$ is denoted by $[M]f$.
\end{definition}

\begin{lemma}
\label{lem:path-coeff}
Consider the polynomial
\[
P(z_1,\dots,z_n):=\prod_{j=1}^{n-1}(z_j-z_{j+1})^{\alpha_j} \in \mathbb{Z}_p[z_1,\ldots,z_n],
\]
where $p$ is a prime such that $p>n$, and $\alpha_1, \ldots, \alpha_{n-1}$ are nonnegative integers satisfying \linebreak $\alpha_1+\cdots+\alpha_{n-1}=(n-1)(p-1)$.  If $[z_1^{n-1}z_2^{p-2}\cdots z_n^{p-2}]P \not\equiv 0 \pmod p,
$
then $\alpha_1=\cdots=\alpha_{n-1}=p-1$.
Moreover, in this case, $[z_1^{n-1}z_2^{p-2}\cdots z_n^{p-2}]P \equiv 1 \pmod p$.
\end{lemma}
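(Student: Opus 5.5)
The plan is to expand $P$ factor by factor with the binomial theorem and observe that the target exponents force a unique choice of term from each factor. Write
\[
(z_j-z_{j+1})^{\alpha_j}=\sum_{k_j+r_j=\alpha_j}\binom{\alpha_j}{k_j}z_j^{k_j}(-1)^{r_j}z_{j+1}^{r_j},
\]
so a monomial of $P$ is specified by nonnegative integers $k_j,r_j$ with $k_j+r_j=\alpha_j$ for $1\le j\le n-1$. Since $z_1$ occurs only in the first factor and $z_n$ only in the last, matching the monomial $z_1^{n-1}z_2^{p-2}\cdots z_n^{p-2}$ gives the linear system
\[
k_1=n-1,\qquad r_j+k_{j+1}=p-2\ \ (1\le j\le n-2),\qquad r_{n-1}=p-2 .
\]
Given the $\alpha_j$, this system determines all $k_j,r_j$ recursively from $k_1$. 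Hence the coefficient is a single term,
\[
[z_1^{n-1}z_2^{p-2}\cdots z_n^{p-2}]P=\prod_{j=1}^{n-1}(-1)^{r_j}\binom{\alpha_j}{k_j},
\]
if the forced values are all nonnegative, and it is $0$ otherwise.

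The key step is a digit (Lucas/Kummer) argument.
\begin{itemize}
\item From the system and nonnegativity, every $r_j\le p-2$ and every $k_j\le p-2$ for $j\ge 2$.
\item Also $k_1=n-1\le p-2$, because $p>n$.
\item So all $k_j,r_j$ are single base-$p$ digits.
\end{itemize}
By Lucas' theorem (or Kummer's carry criterion), $\binom{k_j+r_j}{k_j}\not\equiv 0 \pmod p$ holds iff adding $k_j$ and $r_j$ in base $p$ produces no carry. For single digits this means $k_j+r_j\le p-1$, that is, $\alpha_j\le p-1$. If the coefficient is nonzero, this holds for every $j$. Combined with $\sum_j\alpha_j=(n-1)(p-1)$, it forces $\alpha_1=\cdots=\alpha_{n-1}=p-1$.

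For the value of the coefficient, set every $\alpha_j=p-1$ and solve the recursion: $k_j=n-j$ and $r_j=p-1-n+j$, and these are indeed nonnegative. Using $\binom{p-1}{k}\equiv(-1)^k\pmod p$, the coefficient is
\[
\prod_j(-1)^{r_j+k_j}=(-1)^{\sum_j\alpha_j}=(-1)^{(n-1)(p-1)}=1,
\]
since $p-1$ is even.

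The main obstacle I expect is the Lucas step. It is essential to have first bounded every $k_j$ and $r_j$ by $p-1$; without that bound $\alpha_j$ could exceed $p$ with a nonzero binomial. I would therefore carefully derive these bounds from the nonnegativity in the linear system before invoking the no-carry criterion.
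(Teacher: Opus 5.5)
Your proposal is correct and follows essentially the same route as the paper: you expand each factor binomially and solve the linear system forced by the target exponents. You then bound every individual exponent by $p-2$ (using $p>n$ for $k_1=n-1$), invoke Lucas' theorem to exclude $\alpha_j\ge p$, and let the degree count force $\alpha_j=p-1$, after which the unique solution $r_j=p-1-n+j$ together with $\binom{p-1}{k}\equiv(-1)^k \pmod p$ gives coefficient $1$. The differences are only cosmetic. You phrase the Lucas step as a no-carry criterion, and you note uniqueness of the solution already for general $\alpha$, so the coefficient is visibly a single product.
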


\begin{proof}
Using the binomial identity, we expand each factor
\[
(z_j-z_{j+1})^{\alpha_j}
=
\sum_{t_j=0}^{\alpha_j}
\binom{\alpha_j}{t_j}(-1)^{t_j}
z_j^{\,\alpha_j-t_j}z_{j+1}^{\,t_j}.
\]
To obtain the monomial $z_1^{\,n-1}z_2^{\,p-2}\cdots z_n^{\,p-2},
$
the exponents of $z_1,\ldots,z_{n-1}$ must satisfy
\[
\alpha_1-t_1 = n-1 \quad \text{and} \quad
t_{j-1}+(\alpha_j-t_j) = p-2 \text{ for all }2\le j\le n-1.
\]
Therefore, we have
\[
0\le t_j<p
\quad\text{and}\quad
0\le \alpha_j-t_j<p \text{ for all } j \in [n-1].
\]
If $\alpha_j\ge p$, then $\alpha_j=p+r$ for some $0\le r\le p-2$, and $t_j>r$. Then,
\[
\binom{\alpha_j}{t_j}
=
\binom{p+r}{t_j}
\equiv
\binom{r}{t_j}
\equiv 0 \pmod p,
\]
by Lucas' theorem. Therefore, if $[z_1^{n-1}z_2^{p-2}\cdots z_n^{p-2}]P \not\equiv 0 \pmod p
$, then we must have $\alpha_j\le p-1$ for all
$j$. Since $\alpha_1+\cdots+\alpha_{n-1}=(n-1)(p-1)$,
we must have $\alpha_1=\cdots=\alpha_{n-1}=p-1,$ as claimed.

It remains to compute the coefficient in this case. Again, we expand each factor
\[
(z_j-z_{j+1})^{p-1}
=
\sum_{t_j=0}^{p-1}\binom{p-1}{t_j}(-1)^{t_j}
z_j^{\,p-1-t_j}z_{j+1}^{\,t_j} \equiv \sum_{t_j=0}^{p-1}
z_j^{\,p-1-t_j}z_{j+1}^{\,t_j} \pmod{p}.
\] 
To obtain the monomial $z_1^{\,n-1}z_2^{\,p-2}\cdots z_n^{\,p-2}$, the exponents must satisfy
\[
p-1-t_1 = n-1 \quad  \text{and } \quad
t_{j-1}+(p-1-t_j) = p-2 \text{ for all }2\le j\le n-1.
\]
The second equation gives $t_j=t_{j-1}+1$, so we obtain a unique solution
\[
t_j=p-n+j-1 \text{ for all }1\le j\le n-1.
\]
Since $p>n$, each $t_j$ lies in $\{0,1,\dots,p-1\}$. Hence, the stated monomial appears with coefficient $1$, as claimed.
\end{proof}

\begin{theorem}\label{thm:alon-Zp}
Let $n\geq 3$ be odd, and let $p>n$ be prime. Then, $C_n$ is Alon
$\mathbb Z_p$-magic.
\end{theorem}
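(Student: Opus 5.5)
The plan is to reduce the top-degree part of the Alon polynomial $g_{C_n}$ to the path polynomial of Lemma \ref{lem:path-coeff}, and then to read off an Alon term from that lemma.

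\textbf{Setup.} Label the vertices of $C_n$ as $v_1,\dots,v_n$ and the edges as $e_j=v_jv_{j+1}$, indices mod $n$, with variable $x_j$ on $e_j$. Write $S_j=x_{j-1}+x_j$ for the edge sum at $v_j$, and take $v^*=v_1$.

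The Alon polynomial has degree $(p-1)(n-1)$. Any monomial of that degree receives contributions only from the top-degree part
\[
\pm\prod_{i=2}^{n}(S_1-S_i)^{p-1}.
\]
Introduce the linear forms $D_j=S_j-S_{j+1}=x_{j-1}-x_{j+1}$ for $1\le j\le n-1$. Then $S_1-S_i=D_1+\cdots+D_{i-1}$, so the top-degree part equals
\[
\pm\prod_{i=2}^{n}(D_1+\cdots+D_{i-1})^{p-1}.
\]

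\textbf{Use of oddness.} Because $n$ is odd, stepping by $2$ around the cycle visits every edge exactly once. The $n$ forms $x_{j-1}-x_{j+1}$, $j\in\mathbb Z_n$, are therefore the consecutive differences along the closed walk
\[
x_0,\;x_2,\;x_4,\;\dots,\;x_{n-1},\;x_1,\;x_3,\;\dots
\]
in the variables. Omitting the one form with $j=n$, namely $x_{n-1}-x_1$, leaves exactly $D_1,\dots,D_{n-1}$. These are the consecutive differences $z_k-z_{k+1}$ of a Hamiltonian path $z_1,\dots,z_n$, which is a reordering of $x_1,\dots,x_n$ up to an overall sign on each difference.

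After this renaming, the monomials in the expansion are monomials in the $z$'s, which are just relabelled $x$'s. Expanding each factor $(D_1+\cdots+D_{i-1})^{p-1}$ by the multinomial theorem (Fact \ref{fact:multinomial}) writes the top part as a $\mathbb Z_p$-linear combination of products $\prod_j (z_{k_j}-z_{k_j+1})^{\alpha_j}$ with $\sum_j\alpha_j=(n-1)(p-1)$. Each such product is a polynomial $P$ as in Lemma \ref{lem:path-coeff}.

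\textbf{Choice of monomial and extraction of its coefficient.} Take $M=z_1^{n-1}z_2^{p-2}\cdots z_n^{p-2}$. Its degree is $(n-1)+(n-1)(p-2)=(n-1)(p-1)$, as required. Every exponent is at most $p-2$, because $n-1\le p-2$ when $p>n$.

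By Lemma \ref{lem:path-coeff}, only the choice $\alpha_j=p-1$ for every $j$ contributes to $[M]$, and that choice contributes $\pm1$. It remains to count how this exponent pattern arises from the multinomial expansion:
\begin{itemize}
\item $D_{n-1}$ occurs only in the factor $i=n$, so that factor must contribute exactly $D_{n-1}^{p-1}$.
\item Inductively, moving downward, the factor $i$ must contribute exactly $D_{i-1}^{p-1}$.
\end{itemize}
Hence the pattern is produced uniquely, with multinomial coefficient $1$. Thus $[M]g_{C_n}\equiv\pm1\not\equiv0 \pmod p$, so $M$ is an Alon term and $C_n$ is Alon $\mathbb Z_p$-magic.

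\textbf{Expected obstacle.} The delicate step is the bookkeeping in the oddness step. I must check that the omitted difference really disconnects the step-$2$ cycle into a single path in the correct orientation. I must also check that the sign changes (some $D_j$ equal $z_{k+1}-z_k$ rather than $z_k-z_{k+1}$) only alter the final coefficient by a global sign $\pm1$. If the orientation does not align, I would instead choose $v^*$, or the starting point of the path, so that the endpoint $z_1$ carrying exponent $n-1$ is compatible with Lemma \ref{lem:path-coeff}.
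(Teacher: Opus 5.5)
Your proof is correct and follows essentially the same route as the paper. It uses the telescoping identity $S_1-S_i=D_1+\cdots+D_{i-1}$ with $D_j=x_{j-1}-x_{j+1}$, oddness of $n$ to realize $D_1,\dots,D_{n-1}$ as the consecutive differences along the path $x_1,x_3,\dots,x_n,x_2,\dots,x_{n-1}$ (this is exactly the paper's bijection $\sigma$), and Lemma~\ref{lem:path-coeff} with the triangular uniqueness argument to get coefficient $\pm1$ for $x_1^{n-1}x_2^{p-2}\cdots x_n^{p-2}$.

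The orientation obstacle you anticipated does not arise. Along that path every $D_j$ equals $z_k-z_{k+1}$ with the correct sign. Even if some signs were flipped, only the all-$(p-1)$ term survives, and each flipped factor would contribute $(-1)^{p-1}=1$.
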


\begin{proof}
Denote the cycle $C_n$ by $v_1v_2\cdots v_nv_1$. Let $x_1$ correspond to the edge $v_nv_1$, and let $x_i$ correspond to the
edge $v_{i-1}v_i$ for $2\leq i\leq n$. We use $v_n$ as the distinguished
vertex. The highest-degree monomials of the corresponding Alon polynomial must come from the polynomial
\begin{equation}\label{eq:cycle-alon-poly-odd}
    f=
\prod_{i=1}^{n-1}
\left((x_1+x_n)-(x_i+x_{i+1})\right)^{p-1}.
\end{equation}
Set $x_0:=x_n$ and define
\[
y_i:=x_{i-1}-x_{i+1} \text{ for all }1\leq i\leq n-1.
\]
Observe that for each $1 \leq i \leq n-1$, we have
\[
(x_1+x_n)-(x_i+x_{i+1})
=
\sum_{t=1}^{i}(x_{t-1}-x_{t+1})
=
y_1+\cdots+y_i.
\]
Hence, we can rewrite \eqref{eq:cycle-alon-poly-odd} as
\begin{equation}\label{eq:cyle-proof-1}
    f=\prod_{i=1}^{n-1}(y_1+\cdots+y_i)^{p-1}.
\end{equation}
We first note that
\[
[y_1^{p-1}\cdots y_{n-1}^{p-1}]\,f=1.
\]
This follows by choosing $y_i^{p-1}$ from the $i$-th factor $(y_1+\cdots+y_i)^{p-1}$ for all $i\in [n-1]$.
 Since $n$ is odd, write $n=2m-1$. Define a bijection
$\sigma:[n]\to [n]$ by
\[
\sigma(j)=
\begin{cases}
\dfrac{j+1}{2}, & \text{if } j \text{ is odd},\\[6pt]
m+\dfrac{j}{2}, & \text{if } j \text{ is even}.
\end{cases}
\]
Now relabel the edge variables by setting
\[
z_{\sigma(j)}:=x_j \text{ for all } j\in [n].
\]
Then, we have
\[
\{y_1,y_2,\ldots,y_{n-1}\}=\{z_1-z_2, \ z_2-z_3,\ldots, z_{n-1}-z_n \},
\]
which gives
\begin{equation}\label{eq:cyle-proof-2}
    y_1^{p-1}\cdots y_{n-1}^{p-1}
=
\prod_{j=1}^{n-1}(z_j-z_{j+1})^{p-1}.
\end{equation}
By Lemma \ref{lem:path-coeff}, the monomial
\[
M := z_1^{n-1}z_2^{p-2}\cdots z_n^{p-2} = x_1^{n-1}x_2^{p-2}\cdots x_n^{p-2}
\]
appears in \eqref{eq:cyle-proof-2} with coefficient $1$ modulo $p$. In fact, by Lemma \ref{lem:path-coeff}, no other term in the expanded form of \eqref{eq:cyle-proof-1} can contribute nontrivially to the monomial $M$, meaning the coefficient of $M$ in $f$ is 1 modulo $p$. Since $n-1\le p-2$ and all other exponents are equal to $p-2$, this monomial $M$ is a valid Alon term, which establishes the result. 
\end{proof}

It is natural to ask whether the special properties of Hartke $\mathbb{Z}_p$-magic graphs stated in Theorem \ref{thm:hartke-special} also hold for Alon $\mathbb{Z}_p$-magic graphs. In this setting, the results are mixed.

\begin{proposition} \label{thm:alon-stable}
Let $p \geq 3$ be prime, and let $G$ be an Alon $\mathbb{Z}_p$-magic graph. Then, a graph $G'$ formed by adding simple edges to $G$ is also Alon $\mathbb{Z}_p$-magic.
\end{proposition}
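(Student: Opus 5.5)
The plan is to show that an Alon term of $g_G$ stays an Alon term of $g_{G'}$, with the same coefficient. By induction on the number of added edges, it suffices to treat a single new edge $e_0=uw$ between two nonadjacent vertices of $G$. Let $y$ be the new variable for $e_0$. I would build $g_{G'}$ with the \emph{same} distinguished vertex $v^*$ that gave the Alon term for $G$; this is allowed because $v^*$ is arbitrary in the definition of the Alon polynomial. Let $M$ be an Alon term of $g_G$. So $M$ is a monomial in $x_1,\dots,x_m$ of degree $(p-1)(|V(G)|-1)$, with nonzero coefficient and every exponent at most $p-2$.

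The key observation is that setting $y=0$ in $g_{G'}$ gives back $g_G$ exactly. Each canonical factor of $g_{G'}$ is
\[
1-\Bigl(\sum_{e\ni v^*}x_e-\sum_{e\ni v}x_e\Bigr)^{p-1},
\]
where the sums now run over edges of $G'$, so $y$ appears only in sums at $u$, $w$ (and possibly $v^*$). Deleting the $y$ terms gives the corresponding factor of $g_G$. This holds also when $e_0$ is incident to $v^*$, since then $y$ only appears inside the common sum at $v^*$.

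Write $g_{G'}=g_{G'}(x,0)+y\cdot R(x,y)$. The monomial $M$ contains no $y$, so it receives no contribution from $yR$. Hence
\[
[M]\,g_{G'}=[M]\,g_{G'}(x,0)=[M]\,g_G\neq 0 .
\]

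It remains to check the degree condition. The vertex set is unchanged, so $g_{G'}$ is a product of $|V(G')|-1=|V(G)|-1$ factors, each of degree at most $p-1$. Thus $\deg g_{G'}\le (p-1)(|V(G')|-1)=\deg M$. Since $M$ has nonzero coefficient, equality holds and $M$ is a top-degree monomial. Its exponents are at most $p-2$ in each $x_i$ and $0\le p-2$ in $y$. Therefore $M$ is an Alon term of $g_{G'}$, and by Theorem \ref{CN} with every $S_i=\mathbb{Z}_p^*$ (of size $p-1$), $G'$ is Alon $\mathbb{Z}_p$-magic.

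I do not expect a real obstacle. The only point that needs care is using the same distinguished vertex $v^*$ for $G'$ as for $G$, together with the case where the new edge is incident to $v^*$; the substitution argument covers both.
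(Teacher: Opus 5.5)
Your argument is correct and is essentially the paper's. The paper writes each canonical factor of $g_{G'}$ as $1-(L_v+N_v)^{p-1}$, with $N_v$ involving only the new variables, and reuses the same summands that produced $M$ in $g_G$. Your substitution of $0$ for the new variable is a cleaner way of justifying that terms containing new variables cannot contribute to $[M]g_{G'}$, a point the paper leaves implicit. The induction on edges is unnecessary, since all new variables can be set to $0$ at once. One small imprecision: if $e_0=v^*w$, then $y$ also appears in the sum at $w$, where it cancels in the $w$-factor, but setting $y=0$ still recovers $g_G$, so nothing changes.
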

\begin{proof}
Let $G$ be a graph, and fix a vertex $v^*\in V(G)$. Consider the Alon polynomial
\[
g_G=\prod_{v\in V(G)\setminus\{v^{*}\}}\left[1-\left(\sum_{e\ni v^{*}}x_{e}-\sum_{e\ni v}x_{e}\right)^{p-1}\right]
\]
corresponding to $G$. Suppose $G$ is Alon $\mathbb{Z}_p$-magic. Then, there exists an Alon term
\[
M=c\prod_{i=1}^m x_i^{t_i}
\]
in the expansion of $g_G$ such that $\deg(M)=(p-1)(n-1),
 c\not\equiv 0 \pmod{p},$ and $t_i\le p-2$ for all $i$. Now, let $G'$ be obtained from $G$ by adding simple edges. Since $V(G')=V(G)$, we have $\deg(g_{G'})=(p-1)(n-1)=\deg(g_G).$ For each vertex $v\neq v^*$, denote by $L_v(x_1,\dots,x_m)$ the linear form appearing in the corresponding canonical factor of $g_G$. In $g_{G'}$, the analogous linear form has the form 
\[
L_v(x_1,\dots,x_m)+N_v(x_{m+1},\dots),
\]
where $N_v$ is a linear form involving only the new variables.
Hence, the $v$-th canonical factor of $g_{G'}$ is
\[
1-\bigl(L_v+N_v\bigr)^{p-1}.
\]
The term $-L_v^{\,p-1}$ in this factor appears
with the same coefficient as in the $v$-th canonical factor of $g_{G}$, i.e., $1-L_v^{\,p-1}$.
Thus, from each canonical factor of $g_{G'}$, we can choose the same summand as was chosen from the corresponding canonical factor of $g_G$ in
the construction of the monomial $M$. Therefore, $M$ is an Alon term for $G'$, from which the result follows.
\end{proof}

\begin{remark}
We note that the claim of Proposition \ref{thm:alon-stable} remains true when adding multiedges or self-loops; the same holds for statement (2) of Theorem \ref{thm:hartke-special}. The remaining special properties of Hartke $\mathbb{Z}_p$-magic graphs as stated in Theorem \ref{thm:hartke-special} are not necessarily true for Alon $\mathbb{Z}_p$-magic graphs. The key ingredient used in \cite{Low_Roberts_Constructing} to construct a larger class of $\mathbb{Z}_p$-magic graphs via graph products was to show that a disjoint union of  Hartke $\mathbb{Z}_p$-magic graphs is again Hartke $\mathbb{Z}_p$-magic. However, a similar claim does not hold for Alon $\mathbb{Z}_p$-magic graphs. For instance, $C_3$ is Alon $\mathbb{Z}_5$-magic, but $C_3 \sqcup C_3$ violates the edge-count condition in Proposition \ref{Prop2} for $p=5$.
\end{remark}

Combining Proposition \ref{thm:alon-stable} with Theorem \ref{thm:alon-Zp} gives the following important result.

\begin{corollary}\label{cor:odd-hamiltonian}
    Let $G$ be a Hamiltonian graph, where $|V(G)| = n$ is odd. Then, $G$ is $\mathbb{Z}_p$-magic, for all primes $p > n$.
\end{corollary}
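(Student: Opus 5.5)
The plan is to combine the two preceding results directly. Let $G$ be Hamiltonian with $|V(G)|=n$ odd, and fix a prime $p>n$. Since $G$ is Hamiltonian, it has a spanning cycle $C$, which is a copy of $C_n$ on the full vertex set $V(G)$. The graph $G$ is obtained from $C$ by adding the remaining edges of $E(G)\setminus E(C)$; these are simple edges, because $G$ is a simple graph.

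\emph{Case $n\ge 3$.} Since $n$ is odd and $p>n$ is prime, Theorem \ref{thm:alon-Zp} shows that $C\cong C_n$ is Alon $\mathbb{Z}_p$-magic. Proposition \ref{thm:alon-stable} then shows that $G$ is Alon $\mathbb{Z}_p$-magic. Two points need checking here.
\begin{itemize}
\item The distinguished vertex $v^*$ used for $C$ must be the same one used for $G$. This holds because the proof of Proposition \ref{thm:alon-stable} keeps the vertex set and $v^*$ fixed, and the theorem for cycles works with whichever vertex is called $v_n$. We may therefore take $v^*$ to be any vertex of $G$ and label the Hamiltonian cycle so that this vertex is $v_n$.
\item The Alon term must survive the passage from $C$ to $G$. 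The polynomial degree is $(p-1)(n-1)$ for both graphs, and every exponent in the Alon term is at most $p-2$.
\end{itemize}

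\emph{From Alon term to labeling.} By the Combinatorial Nullstellensatz (Theorem \ref{CN}) with $S_i=\mathbb{Z}_p^*$, which has $p-1\ge t_i+1$ elements, there is a point $s\in(\mathbb{Z}_p^*)^{|E(G)|}$ with $g_G(s)\neq 0$. Each canonical factor of $g_G$ is nonzero at $s$. By Fermat's little theorem, this forces the vertex sum at every $v\ne v^*$ to equal the vertex sum at $v^*$. So $s$ is a $\mathbb{Z}_p$-magic labeling of $G$, as observed right after \eqref{Aloneq}.

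\emph{Case $n=1$.} A Hamiltonian graph normally has $n\ge 3$. If one allows $n=1$, the graph is $K_1$, which has no edges and is trivially magic; this is just a degenerate remark.

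The main obstacle is the bookkeeping of the distinguished vertex and of the indexing between $C_n$ and the edges of $G$. I expect this to be routine once $v^*$ is fixed on the Hamiltonian cycle.
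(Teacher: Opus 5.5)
Your proposal is correct and follows the paper's own route. The Hamiltonian cycle is a copy of $C_n$, which is Alon $\mathbb{Z}_p$-magic by Theorem~\ref{thm:alon-Zp}, and Proposition~\ref{thm:alon-stable} carries that Alon term over to $G$ once the remaining edges are added; your extra bookkeeping (fixing $v^*$ on the Hamiltonian cycle, and spelling out the Nullstellensatz step with $S_i=\mathbb{Z}_p^*$) makes explicit what the paper leaves implicit.
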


Small examples show that Hamiltonian graphs on an even number of vertices are not necessarily $\mathbb{Z}_p$-magic. For example, as pointed out in Remark \ref{rmk:edge-stability-c4+e}, $C_4 + \{e\}$ is not $\mathbb{Z}_3$-magic. It turns out that even cycles are not Alon $\mathbb Z_p$-magic, which we now show.

\begin{lemma}\label{lem:Lagrange}
Let $p$ be prime, and fix
$\alpha=(\alpha_1,\dots,\alpha_n)$ with $0\leq \alpha_i\leq p-1$ for all
$i\in[n]$. Let $f\in \mathbb Z_p[x_1,\dots,x_n]$ be a homogeneous polynomial of degree
$|\alpha|=\alpha_1+\cdots+\alpha_n$. Then,
\[
[x^\alpha]f
=
(-1)^n
\sum_{z\in\mathbb Z_p^n}
f(z)\prod_{i=1}^n z_i^{p-1-\alpha_i}.
\]
\end{lemma}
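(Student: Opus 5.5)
The plan is to reduce everything to the single-variable power sums $\sum_{z\in\mathbb Z_p} z^k$. Since both sides of the identity are linear in $f$, and $f$ is a linear combination of monomials $x^\beta$ with $|\beta|=|\alpha|$, it suffices to prove the formula for $f=x^\beta$. That is, we must show
\[
(-1)^n\sum_{z\in\mathbb Z_p^n}\prod_{i=1}^n z_i^{\beta_i+p-1-\alpha_i}
=\begin{cases}1,& \beta=\alpha,\\ 0,& \beta\neq\alpha,\end{cases}
\]
for every $\beta\in\mathbb N^n$ with $|\beta|=|\alpha|$. We adopt the convention $0^0=1$, which is the one consistent with evaluating a polynomial at points having zero coordinates. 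The sum over $\mathbb Z_p^n$ factors as a product over $i$ of one-variable sums $S(k_i)=\sum_{z\in\mathbb Z_p} z^{k_i}$, where $k_i=\beta_i+p-1-\alpha_i\geq 0$ because $\alpha_i\le p-1$.

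Next comes the standard fact about power sums modulo $p$: $S(k)\equiv -1$ if $k\ge 1$ and $(p-1)\mid k$, while $S(k)\equiv 0$ otherwise. For $k=0$ this gives $S(0)=p\equiv 0$, since $0^0=1$. For $k\geq1$ we have $\sum_{z\in\mathbb Z_p^*}z^k$; using a generator $g$ of the cyclic group $\mathbb Z_p^*$, this is a geometric series that equals $p-1\equiv -1$ when $(p-1)\mid k$ and equals $0$ otherwise.

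With this in hand:
\begin{itemize}
\item If $\beta=\alpha$, every $k_i=p-1$, so the product is $(-1)^n$. Multiplying by the prefactor $(-1)^n$ gives $1$.
\item If $\beta\neq\alpha$, then since $|\beta|=|\alpha|$ there is some index $i$ with $\beta_i<\alpha_i$. For that index, $0\le k_i=p-1-(\alpha_i-\beta_i)<p-1$. Either $k_i=0$, giving $S(0)\equiv 0$, or $1\le k_i<p-1$, so $(p-1)\nmid k_i$ and $S(k_i)\equiv0$. In both cases the whole product vanishes.
\end{itemize}
Summing over the monomials of $f$ yields $[x^\alpha]f$, as claimed.

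The only delicate point is the bookkeeping at zero. One must fix the convention $0^0=1$ for the factor $z_i^{p-1-\alpha_i}$ when $\alpha_i=p-1$, and for the terms of $f(z)$. One must also observe that it is exactly the homogeneity hypothesis $|\beta|=|\alpha|$ that forces some $\beta_i<\alpha_i$, which is what rules out the other exponents $k_i$ that are positive multiples of $p-1$ (for instance $k_i=2(p-1)$). Everything else is routine.
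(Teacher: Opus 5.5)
Your proof is correct and follows essentially the same route as the paper: expand $f$ into monomials $x^\beta$ with $|\beta|=|\alpha|$, factor the sum over $\mathbb Z_p^n$ into one-variable power sums $\sum_{z\in\mathbb Z_p} z^{k}$, and combine the standard power-sum identity with homogeneity to eliminate every $\beta\neq\alpha$. Picking an index with $\beta_i<\alpha_i$ is just a rephrasing of the paper's step ($\beta_i=\alpha_i+q_i(p-1)$ with $q_i\ge 0$ and $\sum_i q_i=0$), and your explicit use of the convention $0^0=1$ (so that $\sum_{z\in\mathbb Z_p} z^0=p\equiv 0$) makes precise a point the paper leaves implicit.
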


\begin{proof}
As $f$ is homogeneous of degree $|\alpha|$, we may write
\[
f(x)=\sum_{|\beta|=|\alpha|} c_\beta \,x^\beta .
\]
Then,
\begin{equation} \label{eq:lem-coeff-homog}
    \sum_{z\in\mathbb Z_p^n}
f(z)\prod_{i=1}^n z_i^{p-1-\alpha_i}
=
\sum_{|\beta|=|\alpha|} c_\beta
\prod_{i=1}^n
\left(
\sum_{z_i\in\mathbb Z_p}
z_i^{\beta_i+p-1-\alpha_i}
\right).
\end{equation}
We use the standard identity
\[
\sum_{z\in\mathbb Z_p} z^r
=
\begin{cases}
-1, & r>0 \text{ and } (p-1)\mid r,\\
0, & \text{otherwise}.
\end{cases}
\]
The summand on the right-hand side of \eqref{eq:lem-coeff-homog} indexed by $\beta$ can be nonzero only if $\beta_i+p-1-\alpha_i
$ is a positive multiple of $p-1$ for all  $i \in [n]$. Equivalently, $\beta_i=\alpha_i+q_i(p-1)
$ for some integer $q_i\geq 0$. Since $|\beta|=|\alpha|$, we get
\[
0=|\beta|-|\alpha|
=(p-1)\sum_{i=1}^n q_i.
\]
This implies we must have $q_i=0$ for all $i\in[n]$, meaning $\beta=\alpha$. Therefore, we obtain
\[
\sum_{z\in\mathbb Z_p^n}
f(z)\prod_{i=1}^n z_i^{p-1-\alpha_i}
=
c_\alpha(-1)^n.
\]
Since $c_\alpha=[x^\alpha]f$, multiplying by $(-1)^n$ gives the result.
\end{proof}

\begin{theorem} \label{thm:alon-Zp-not}
Let $n\ge 4$ be even, and let $p>n$ be prime. Then, $C_n$ is not Alon $\mathbb Z_p$-magic.
\end{theorem}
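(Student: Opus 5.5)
Since an Alon term has degree $(p-1)(n-1)=\deg g_{C_n}$, only the top-degree part matters. So I would work with the polynomial $f=\prod_{i=1}^{n-1}\bigl((x_1+x_n)-(x_i+x_{i+1})\bigr)^{p-1}$ from the proof of Theorem \ref{thm:alon-Zp}, using the same labeling and $v^*=v_n$. Write $L_i=(x_1+x_n)-(x_i+x_{i+1})$. The goal is to show that $[x^\alpha]f\equiv 0 \pmod p$ for every $\alpha$ with $|\alpha|=(p-1)(n-1)$ and $0\le\alpha_i\le p-2$.

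The key structural observation is that, for $n$ even, the linear map $x\mapsto(L_1(x),\dots,L_{n-1}(x))$ has a kernel of dimension at least $2$. It contains the all-ones vector $\mathbf 1$ and the alternating vector $w=(1,-1,1,\dots,1,-1)$. For the all-ones vector, each $L_i=2-2=0$. For the alternating vector, we have $x_1+x_n=1-1=0$ and $x_i+x_{i+1}=0$, and this is exactly where evenness of $n$ is used. Consequently $f(z+a w+b\mathbf 1)=f(z)$ for all $a,b\in\mathbb Z_p$.

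Next I apply Lemma \ref{lem:Lagrange}, which is valid since $f$ is homogeneous of degree $|\alpha|$ and $\alpha_i\le p-1$. It gives $[x^\alpha]f=(-1)^n\sum_{z}f(z)\prod_i z_i^{p-1-\alpha_i}$. I split $\mathbb Z_p^n$ into cosets $z+\mathbb Z_p w$ and use the invariance $f(z+cw)=f(z)$. For a fixed representative $z$, the inner sum is $\sum_{c\in\mathbb Z_p}\prod_i (z_i+cw_i)^{p-1-\alpha_i}$. This is a polynomial in $c$ of degree exactly $\sum_i(p-1-\alpha_i)=n(p-1)-(n-1)(p-1)=p-1$. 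Since $\sum_c c^r$ vanishes for $0\le r<p-1$ and equals $-1$ for $r=p-1$, the inner sum is $-\prod_i w_i^{p-1-\alpha_i}=\pm1$, independently of $z$. Hence $[x^\alpha]f=\pm\sum_{z:\,z_n=0}f(z)$, where the slice $z_n=0$ serves as a set of coset representatives because $w_n=-1\neq0$.

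Finally, the slice $\{z_n=0\}$ is invariant under translation by $u:=w+\mathbf 1=(2,0,2,0,\dots,2,0)$, whose last coordinate is $0$, and $f$ is constant along $u$. The slice therefore decomposes into $p^{n-2}$ lines $z+\mathbb Z_p u$ of size $p$, each contributing $p\cdot f(z)\equiv 0$. So $\sum_{z_n=0}f(z)\equiv0$, and hence every candidate coefficient vanishes and no Alon term exists.

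The main obstacle is the second step. A single translation invariance does not kill the coefficient, because the Lagrange sum along $w$ produces a nonzero factor $\pm1$. One must notice the second kernel direction and check that it can be chosen inside the slice, i.e. with $u_n=0$ and $u\neq0$; this requires $p\neq2$, which holds since $p>n\ge4$.
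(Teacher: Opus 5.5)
Your proof is correct and is essentially the paper's argument: Lemma~\ref{lem:Lagrange} combined with invariance of $f$ under the two-dimensional translation group spanned by $\mathbf 1$ and the alternating vector $w$. This is the same group as the paper's odd/even shifts $z_i\mapsto z_i+a$ ($i$ odd), $z_i\mapsto z_i+b$ ($i$ even).

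The only difference is bookkeeping. The paper uses the odd/even basis so that $\prod_i z_i^{\gamma_i}$ factors, and each one-variable sum then vanishes for degree reasons. You instead first sum along $w$, getting the nonzero constant $-\prod_i w_i^{\gamma_i}$, and then kill $\sum_{z_n=0}f(z)$ using the lines in direction $u=w+\mathbf 1$; your route never needs $\alpha_i\le p-2$.
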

\begin{proof}
Denote the cycle $C_n$ by $v_1v_2\cdots v_nv_1$. Let $x_1$ correspond to the edge $v_nv_1$, and let $x_i$ correspond to the
edge $v_{i-1}v_i$ for $2\leq i\leq n$. We use $v_n$ as the distinguished
vertex. The highest-degree monomials of the corresponding Alon polynomial must come from the polynomial
\begin{equation}\label{eq:cycle-alon-poly-even}
    f=
\prod_{i=1}^{n-1}
\left((x_1+x_n)-(x_i+x_{i+1})\right)^{p-1}.
\end{equation}
Observe that $f$ is a homogeneous polynomial of degree $(n-1)(p-1)$.

Suppose, toward a contradiction, that $C_n$ is Alon $\mathbb Z_p$-magic,  where $n$ is even and $p > n$ is prime. Then there exists a monomial $x^\alpha=x_1^{\alpha_1}\cdots x_n^{\alpha_n}$
appearing in $f$ with nonzero coefficient and satisfying
$0\leq \alpha_i\leq p-2$, for all $i \in [n]$. Further, we have 
\[
\sum_{i=1}^n \alpha_i=(n-1)(p-1).
\]
Define
\[
\gamma_i=p-1-\alpha_i.
\]
Then, $\gamma_i\geq 1$ for all $i$, and $\sum_{i=1}^n \gamma_i=p-1$.
By Lemma \ref{lem:Lagrange}, we have
\begin{equation} \label{eq:Lagrange}
    [x^\alpha]f
=
(-1)^n
\sum_{z\in\mathbb{Z}_p^n}
f(z)\prod_{i=1}^n z_i^{\gamma_i}.
\end{equation}
For $a,b\in\mathbb{Z}_p$, consider the translation
\[
z_i\mapsto z_i+a \quad \text{for }i\text{ odd},\qquad
z_i\mapsto z_i+b \quad \text{for }i\text{ even}.
\]
Since $n$ is even, every vertex sum is translated by $a+b$. Hence,
each difference of vertex sums in \eqref{eq:cycle-alon-poly-even} is unchanged, meaning $f(z)$ is invariant under this
translation. Let $\mathcal R$ be a set containing exactly one representative from each
orbit of this action. We may partition the sum in \eqref{eq:Lagrange} as
\[
[x^\alpha]f
=
(-1)^n
\sum_{z\in\mathcal R}
f(z)
\left(\sum_{a\in\mathbb{Z}_p}\prod_{i\text{ odd}}(z_i+a)^{\gamma_i}\right)
\left(\sum_{b\in\mathbb{Z}_p}\prod_{i\text{ even}}(z_i+b)^{\gamma_i}\right).
\]
Observe that
\[
\prod_{i\text{ odd}}(z_i+a)^{\gamma_i}
\]
is a polynomial in $a$ of degree 
\[
0<\sum_{i\text{ odd}}\gamma_i<p-1,
\]
since every $\gamma_i\geq 1$ and $\sum_i\gamma_i=p-1$.
Therefore, we must have
\[
\sum_{a\in\mathbb{Z}_p}\prod_{i\text{ odd}}(z_i+a)^{\gamma_i} \equiv 0 \pmod{p}.
\]
A similar argument for $\displaystyle \prod_{i\text{ even}}(z_i+b)^{\gamma_i}
$ establishes that every orbit contributes zero, and hence
\[
[x^\alpha]f \equiv 0 \pmod{p},
\]
contradicting the choice of $x^\alpha$. Therefore, $C_{n}$ is not Alon
$\mathbb Z_p$-magic when $n$ is even.
\end{proof}

\subsection{General $\mathbb{Z}_k$-magic graphs}

The Combinatorial Nullstellensatz is formulated over a field; hence, working over the finite field $\mathbb{Z}_p$ is convenient for $\mathbb{Z}_p$-magic graphs, where $p$ is prime. For general $\mathbb{Z}_k$-magic graphs, we may use roots of unity to encode modular sums and obtain a polynomial certificate.

\begin{proposition}\label{prop:general-Zn}
Let $G$ be a graph. Suppose that $k\ge 2$, $t\in \mathbb{Z}_k$, and $\omega$ is a primitive $k$-th root of unity. Consider the polynomial
\[
P_{G,t}(x)
=
\prod_{v\in V(G)}
\frac{1}{k}
\sum_{j=0}^{k-1}
\omega^{-tj}
\prod_{e\ni v}x_e^{j} \; \in \mathbb{C}[x],
\]
where ${x}=(x_e)_{e\in E}$.
For an edge labeling $h:E(G)\to \mathbb{Z}_k^*,
$
write $P_{G,t}(h):=P_{G,t}\bigl((\omega^{h(e)})_{e\in E(G)}\bigr)$.
Then,
\[
P_{G,t}(h)=
\begin{cases}
1, & \text{if } h \text{ is a } t\text{-sum } \mathbb{Z}_k\text{-magic labeling},\\
0, & \text{otherwise.}
\end{cases}
\]
\end{proposition}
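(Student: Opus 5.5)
The plan is to evaluate $P_{G,t}(h)$ one canonical factor at a time, using the orthogonality relation for $k$-th roots of unity. First fix an edge labeling $h:E(G)\to\mathbb{Z}_k^*$ and substitute $x_e=\omega^{h(e)}$. For a vertex $v$ the inner product becomes
\[
\prod_{e\ni v}\omega^{j h(e)}=\omega^{j\sum_{e\ni v}h(e)}=\omega^{j h^+(v)}.
\]
This is well defined modulo $k$ because $\omega^k=1$, so the fact that $h^+(v)$ is only defined as a residue class causes no trouble. Hence the factor indexed by $v$ equals
\[
\frac{1}{k}\sum_{j=0}^{k-1}\omega^{j(h^+(v)-t)}.
\]

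Next, I will invoke the standard identity that for an integer $r$,
\[
\frac1k\sum_{j=0}^{k-1}\omega^{jr}=
\begin{cases}1, & k\mid r,\\ 0, & \text{otherwise}.\end{cases}
\]
The proof is short. If $k\mid r$, every summand is $1$. Otherwise $\zeta=\omega^r\neq 1$ because $\omega$ is primitive, and the geometric sum $(\zeta^k-1)/(\zeta-1)$ vanishes since $\zeta^k=1$. Applying this with $r=h^+(v)-t$ shows that the $v$-factor equals $1$ exactly when $h^+(v)=t$ in $\mathbb{Z}_k$, and equals $0$ otherwise.

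Finally, $P_{G,t}(h)$ is the product of these $0/1$ indicators over all $v\in V(G)$. It is therefore $1$ exactly when $h^+(v)=t$ for every vertex, which is the definition of a $t$-sum $\mathbb{Z}_k$-magic labeling since $h$ already takes values in $\mathbb{Z}_k^*$. In every other case at least one factor is $0$, so the product is $0$.

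There is no real obstacle here. The only points needing care are that the primitivity of $\omega$ is exactly what gives $\omega^r\neq1$ when $k\nmid r$, and that exponents may be read modulo $k$ throughout.
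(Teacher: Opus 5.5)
Your proof is correct and takes essentially the same route as the paper. Both substitute $x_e=\omega^{h(e)}$ to reduce each vertex factor to $\frac{1}{k}\sum_{j=0}^{k-1}\omega^{j(h^+(v)-t)}$, then use the geometric-series orthogonality of $k$-th roots of unity to show each factor is the $0/1$ indicator of $h^+(v)=t$; your remark that primitivity of $\omega$ guarantees $\omega^{r}\neq 1$ when $k\nmid r$ spells out a step the paper leaves implicit.
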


\begin{proof}
For each vertex $v\in V(G)$, set
\[
S_v=\sum_{e\ni v}h(e)\in \mathbb{Z}_k.
\]
Then, the factor of $P_{G,t}(h)$ corresponding to $v$ is
\[
\frac{1}{k}
\sum_{j=0}^{k-1}
\omega^{-tj}
\prod_{e\ni v}\omega^{j h(e)}
=
\frac{1}{k}
\sum_{j=0}^{k-1}
\omega^{j(S_v-t)}.
\]
If $h$ is a $t$-sum $\mathbb{Z}_k$-magic labeling, then $S_v\equiv t \pmod k
$ for every vertex $v$. Hence, $\omega^{j(S_v-t)}=1
$
for every $j$, and so each vertex factor is
\[
\frac{1}{k}\sum_{j=0}^{k-1}1=1,
\]
which gives $P_{G,t}(h)=\prod_{v\in V(G)}1=1$.

Conversely, suppose that $h$ is not a $t$-sum $\mathbb{Z}_k$-magic labeling. Then, there exists at least one vertex $v$ such that $S_v\not\equiv t \pmod k$. For this vertex, $\omega^{S_v-t}\neq 1$. But since $\omega^{k(S_v-t)}=1$, we have
\[
\sum_{j=0}^{k-1}\omega^{j(S_v-t)}
=
\frac{1-\omega^{k(S_v-t)}}{1-\omega^{S_v-t}}
=
0.
\]
Hence, the factor corresponding to this vertex is $0$, and $P_{G,t}(h)=0$.
\end{proof}

The degree of the polynomial $P_{G,t}$  in Proposition \ref{prop:general-Zn} is  $2|E(G)|(k-1)$. Hence, the methods developed for $\mathbb{Z}_p$-magic graphs in earlier sections of this paper do not transfer directly. In this case, if the reduction of $P_{G,t}$ modulo the ideal generated by $\{1+x_e+\cdots +x_e^{k-1}:e\in E(G)\}$ is not identically zero, then $G$ is $t$-sum $\mathbb{Z}_k$-magic. We leave this topic for future work beyond explicit computations for small graphs. 

\section{$\mathbb{Z}_{k}$-antimagic labelings} \label{S:antimagic}

Recall that a graph $G$ is called {$\mathbb{Z}_{k}$-antimagic} if there exists an edge labeling $h: E(G) \to \mathbb{Z}_{k} \backslash \{0\}$ such that the induced vertex labeling $h^+(v)=\sum_{e\ni v} h(e)$ (mod $k$) is injective. 

\begin{lemma}\label{power2}
Let $G$ be a connected simple graph with $m\geq 2$ edges. Then, $G$ is $\mathbb{Z}_{k}$-antimagic for all $k \geq 2^m - 1$.
\end{lemma}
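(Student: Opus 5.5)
The plan is a direct construction that uses powers of $2$, so that vertex sums are distinct already as integers and stay distinct modulo $k$. Enumerate the edges $e_1,\dots,e_m$ and set $h(e_i)=2^{i-1}$. Since $k\ge 2^m-1$, every label $2^{i-1}\le 2^{m-1}<k$ is a nonzero element of $\mathbb{Z}_k$, so $h$ is admissible.

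First compare the induced sums as integers. The integer sum at $v$ is $S_v=\sum_{e_i\ni v}2^{i-1}$, and its binary expansion encodes the set of edges incident to $v$. So $S_u=S_v$ as integers forces $u$ and $v$ to have the same set of incident edges. Because $G$ is simple, connected, and has $m\ge 2$ edges, this is impossible for $u\neq v$. A common incident edge can only be $uv$ itself, so the two sets would both equal $\{uv\}$. Then $u$ and $v$ would both have degree one, and $uv$ would be a $K_2$ component, contradicting connectivity with $m\ge2$. Connectedness also rules out isolated vertices, so no $S_v$ is empty (the case $n=1$ cannot occur since $m\ge 2$). Hence the integers $S_v$ are pairwise distinct.

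The remaining step, which I expect to be the main obstacle, is passing from distinct integers to distinct residues modulo $k$. Each $S_v$ lies in $[1,2^m-1]$. If $k\ge 2^m$, all values are below $k$, so reduction is injective.

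The boundary case $k=2^m-1$ is delicate. Only the value $S_v=2^m-1$, meaning a vertex incident to every edge, reduces to $0$. This could collide only with a vertex having sum $0$, and no such vertex exists. More precisely, any two distinct values in $[1,2^m-1]$ differ by less than $2^m-1=k$ unless they are $1$ and $2^m-1$, which differ by exactly $k$. So a collision would require one vertex incident to all edges (a star center) and another vertex incident to exactly $e_1$. I would first try to avoid this by choosing the enumeration. If $G$ has a vertex $c$ incident to all edges, then $G$ is a star $K_{1,m}$, since with $m\ge2$ a triangle cannot have all edges at one vertex. In that case every leaf has sum equal to a single power of $2$, and the center has sum $2^m-1\equiv 0$, which is distinct from every leaf value. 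So the only possible coincidence, $1\equiv 2^m-1$, would need $1\equiv 0 \pmod k$, which is false. Concretely, distinct elements of $[1,2^m-1]$ are congruent mod $2^m-1$ only for the pair $\{1,2^m-1\}$, whose residues are $1$ and $0$. These differ because $k\ge 3$ (as $m\ge2$), so in fact no collision occurs at all. With this, $h^+$ is injective for every $k\ge 2^m-1$.
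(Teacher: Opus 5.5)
Your proof is correct and uses the same construction as the paper: label the edges $2^0,\dots,2^{m-1}$ and invoke uniqueness of binary expansions. You also supply a detail the paper glosses over, namely that simplicity, connectivity and $m\ge 2$ force distinct vertices to have distinct incident-edge sets.

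There is one slip in your boundary discussion. The integers $1$ and $2^m-1$ differ by $2^m-2=k-1$, not by $k$, so there is no exceptional pair at all. Any two distinct integers in $[1,2^m-1]$ differ by at most $2^m-2<k$, so they are incongruent modulo every $k\ge 2^m-1$, and the star digression is unnecessary.
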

\begin{proof}
Label the edges with the distinct values $2^j$ for $j = 0, 1, \dots, m-1$. Then, the induced label on any vertex is a sum of a nonempty subset of these powers of 2 and therefore has a value between $1$ and $2^m-1$. All such vertex labels are distinct because no two such sums can be equal unless they include exactly the same summands. Thus, the vertex labels remain distinct when reduced modulo $k$, for any $k \geq 2^m-1$.
\end{proof}

When $k=p$ is prime, we can substantially improve the bound in Lemma \ref{power2}.

\begin{theorem}\label{thm:weak-bound-connected-graph}
Let $n \geq3$. Every connected graph on $n$ vertices is $\mathbb{Z}_p$-antimagic whenever $p$ is prime and $p>2n-3$.
\end{theorem}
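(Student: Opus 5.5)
The plan is a direct greedy labeling along a spanning tree, where the only delicate point is the last edge; this is where the bound $p>2n-3$ comes from. I will not use the Combinatorial Nullstellensatz. Let $G$ be connected on $n\ge 3$ vertices and fix a spanning tree $T$. Give every edge of $E(G)\setminus E(T)$ the label $1$; these labels stay fixed. For each vertex $v$ the induced sum is then $h^+(v)=c_v+\sum_{e\in E(T),\,e\ni v}h(e)$, where $c_v$ is a known constant. So only the $n-1$ tree edges remain to be chosen from $\mathbb Z_p^*$.

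Root $T$ at a leaf $r$ of $T$ and let $w$ be the unique $T$-neighbour of $r$. Since $n\ge 3$, $w$ has at least one child in $T$. Order the non-root vertices $u_1,\dots,u_{n-1}$ so that every vertex comes after all of its descendants (for example, reverse BFS order), with $u_{n-1}=w$ last. Process $u_1,\dots,u_{n-2}$ in turn. When $u_i$ is processed, all edges to its children are already labeled, so $h^+(u_i)=a_i+x$, where $x$ is the label of the edge from $u_i$ to its parent. Choose $x\in\mathbb Z_p^*$ so that $h^+(u_i)$ differs from $h^+(u_1),\dots,h^+(u_{i-1})$. This forbids at most $1+(i-1)\le n-2<p$ values, so a choice exists.

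There is one extra requirement for the last child $u_j$ of $w$ processed before $w$. The sum at $r$ is $h^+(r)=c_r+y$, where $y=h(wr)$ is still free, and the sum at $w$ is $h^+(w)=a+y$, where $a$ is fixed once all child edges of $w$ are labeled. We need $a\ne c_r$, since otherwise $h^+(w)=h^+(r)$ regardless of $y$. The label $h(u_jw)$ enters $a$ linearly with coefficient $1$, so requiring $a\ne c_r$ forbids one additional value when choosing $h(u_jw)$. The total number of forbidden values at that step is at most $1+(n-3)+1=n-1<p$, so it is still satisfiable.

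Finally choose $y=h(wr)$. We need $y\neq 0$, $a+y\notin\{h^+(u_1),\dots,h^+(u_{n-2})\}$ and $c_r+y\notin\{h^+(u_1),\dots,h^+(u_{n-2})\}$. The condition $a+y\ne c_r+y$ holds automatically because $a\neq c_r$. This forbids at most $1+(n-2)+(n-2)=2n-3$ values of $y$, and since $p>2n-3$ some $y$ survives. All $n$ induced sums are then pairwise distinct, so $G$ is $\mathbb Z_p$-antimagic. The main obstacle is exactly this final edge, whose label controls two vertex sums simultaneously. The device that handles it is the choice of $r$ as a tree leaf with $w$ processed last, which pushes the needed inequality $a\ne c_r$ onto an earlier step with spare room. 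If one preferred a polynomial proof, one could apply Theorem~\ref{CN} to $y\prod_{u}(a+y-h^+(u))(c_r+y-h^+(u))$, but the counting argument suffices.
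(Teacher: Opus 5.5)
Your proof is correct, and it takes a genuinely different route from the paper's.

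The paper never builds a labeling. It applies the Combinatorial Nullstellensatz to $p_G=\prod_{\{u,v\}}(s_u-s_v)$ with $s_v=\sum_{e\ni v}x_e$. It checks that every factor is a nonzero linear form, which is where connectivity and $n\ge 3$ enter. It then takes a leading monomial, whose coefficient is $\pm1$. Finally it bounds each exponent by $2(n-2)$, the number of pairs $\{u,v\}$ containing exactly one endpoint of a fixed edge, so $p-1\ge 2n-3$ suffices.

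You instead label a spanning tree greedily in post-order from a leaf root. Every sum except those at $w$ and $r$ is finalized with at most $n-1$ forbidden values, so the threshold is reached only at the edge $wr$, which moves two sums at once. Spending one extra forbidden value on the last child edge of $w$ to force $a\neq c_r$ correctly removes the one collision that $y$ cannot repair. The two counts agree for the same structural reason: two endpoints, each separated from the remaining $n-2$ vertices, plus the excluded value $0$.

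Your route is elementary and constructive. Because each constraint $a+x\neq b$ excludes exactly one value in any $\mathbb{Z}_k$, it never uses primality. It therefore shows that every connected graph on $n\ge 3$ vertices is $\mathbb{Z}_k$-antimagic for all $k\ge 2n-2$. The paper's field-based argument, combined with the antimagic version of Lemma~\ref{lem:GM_divide}, covers only those $k$ having a prime divisor exceeding $2n-3$.

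The paper's approach keeps the result inside its polynomial-method framework. There the certificate is an explicit monomial whose exponents one might lower by a better choice, for instance using the maximum degree as suggested in the Discussion. That framework is also what underlies Remark~\ref{rmk:antimagic-stable}.
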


\begin{proof}
Let $G = (V,E)$ be a connected graph on $n$ vertices. For each edge
$e\in E(G)$, let $x_e$ be the corresponding variable. For each vertex
$v\in V(G)$, define
\[
s_v=\sum_{e\ni v}x_e.
\]
Consider the polynomial
\begin{equation}\label{eq:alon-antimagic}
    p_G=\prod_{\{u,v\}\subseteq V(G)}(s_u-s_v)
\in \mathbb{Z}_p[x_1,\ldots,x_m].
\end{equation}
For any labeling $h:E(G)\to \mathbb{Z}_{p} \backslash \{0\}$, write $p_G(h) := p_G\left((h(e))_{e\in E(G)}\right)$. Then $p_G(h) \neq 0$ if and
only if the induced vertex sums
\[
h^+(v)=\sum_{e\ni v}h(e)
\]
are pairwise distinct, i.e., $h$ is a $\mathbb{Z}_p$-antimagic labeling. The result follows if the polynomial $p_G$ in \eqref{eq:alon-antimagic} has a monomial whose coefficient is nonzero and whose exponent in each variable is at most $2n-4$. We proceed to establish this.

First, note that each factor $(s_u-s_v)$ is nonzero. Indeed, if $u$ and $v$ are nonadjacent, then, since $G$ is connected, there is a shortest $u$-$v$ path. The first edge on this path is incident to $u$ but not to $v$, so its variable appears in $(s_u-s_v)$ with nonzero coefficient. If $u$ and $v$ are adjacent, then the variable corresponding to $uv$ cancels, but since $G$ is connected and $n\geq 3$, at least one of $u$ and $v$ is incident to another edge, whose variable appears in exactly one of $s_u$ and $s_v$. Hence, $s_u-s_v\neq 0$. It follows that the polynomial $p_G$ is not identically zero. Moreover, $p_G$ is homogeneous of degree $\binom n2$. Hence, there
exists a monomial
\[
x^\alpha=\prod_{e\in E(G)}x_e^{\alpha_e}
\]
of total degree $\binom n2$ whose coefficient in $p_G$ is nonzero. In fact, choosing the leading monomial of $p_G$ with respect to any monomial order gives a monomial whose coefficient is $\pm 1$. Indeed, fix any monomial order. Since each factor $s_u-s_v$ is a nonzero
linear form with coefficients in $\{0,1,-1\}$, its leading term has coefficient
$\pm1$. By multiplicativity of leading terms, the leading term of $p_G$ is the
product of the leading terms of these factors. We now bound the exponents in such a monomial.

Fix an edge $e=ab$. The
variable $x_e$ appears in the factor $s_u-s_v$ only when exactly one of
$u,v$ lies in $\{a,b\}$. There are at most $2(n-2)
$ such pairs. Therefore, no monomial of $p_G$ can contain $x_e$ with exponent
larger than $2(n-2)$. Thus, we have
\[
\alpha_e\leq 2n-4
\text{ for all }e\in E(G).
\]
Hence, by the Combinatorial
Nullstellensatz, there exists a vector $\textbf{x} \in (\mathbb{Z}_p^*)^m$ such that $p_G(\textbf{x})\neq 0$, provided that $p > 2n-3$. This implies that $G$ is $\mathbb{Z}_p$-antimagic.
\end{proof}

\begin{remark} \label{rmk:antimagic-stable}
In \cite{Odab_Roberts_Low}, it is shown that if a connected graph $G$ is $\mathbb{Z}_k$-antimagic, then a graph $G'$ formed by adding simple edges to $G$ is also $\mathbb{Z}_k$-antimagic. The authors of \cite{Odab_Roberts_Low} did not use polynomial methods. Applying the same reasoning used in the proof of Proposition \ref{thm:alon-stable} to the polynomial construction in \eqref{eq:alon-antimagic}, we can recover their result for the class of graphs for which the Combinatorial Nullstellensatz certifies the $\mathbb{Z}_p$-antimagic property.
\end{remark}

\begin{remark}
We may drop the condition ``modulo $k$'' from the definition of a $\mathbb{Z}_k$-antimagic labeling and call the resulting labeling a \textit{$k$-antimagic labeling}. Using the same argument as in the proof of Theorem \ref{thm:weak-bound-connected-graph}, but over the polynomial ring $\mathbb{R}[x_1,\ldots,x_m]$, we obtain that every connected graph on $n \geq 3$ vertices is $k$-antimagic whenever $k > 2n-3$.
\end{remark}

\section{Discussion and future directions}
In this paper, we used the Combinatorial Nullstellensatz to study
$\mathbb{Z}_p$-magic and $\mathbb{Z}_p$-antimagic  graph labeling problems. We showed that, in several cases, the coefficients of monomials admit
combinatorial interpretations in terms of structural properties of the graph. These interpretations appear to be of
independent interest and suggest several directions for further work.

As pointed out in Remark \ref{rmk:counting-eulerian}, computing $\Psi_p$ for broader classes of $(p-1)$-regular graphs, as defined in Lemma \ref{lem:factor-eulerian-coeff}, remains open. In this paper, we evaluated $\Psi_p(K_p)$ and used this computation to obtain Hartke $\mathbb Z_p$-magic graphs. A particularly interesting next case is the complete bipartite graph $K_{p-1, p-1}$. The computation of $\Psi_p(K_{p-1,p-1})$ has a concrete matrix interpretation. By associating the edge corresponding to row $i$ and column $j$ with the $(i,j)$-entry of a $(p-1) \times (p-1)$ matrix over $\mathbb{Z}_p$, an admissible function
$\psi:E(K_{p-1,p-1})\to\{-1,0,1\}$ in the proof of
Lemma~\ref{lem:factor-eulerian-coeff} may be viewed as a weighted enumeration of balanced matrices with entries in $\{-1,0,1\}$ such that all row and column sums are zero.

Another direction is to further clarify the relationship between ordinary
$\mathbb Z_p$-magic labelings and Hartke and Alon $\mathbb Z_p$-magic certificates. For example, if $G$ is $5$-regular, then the existence of an ordinary $0$-sum
$\mathbb Z_3$-magic labeling is equivalent to the existence of a spanning subgraph $H\subseteq G$ satisfying $d_H(v)\in\{1,4\}$ for every vertex $v$.
Thus, the labeling problem can be reformulated as a
degree-constrained graph factorization problem. It would be interesting to
understand whether such graph factorization problems can be studied via 
Hartke-Alon-type certificates.

Finally, the proof of
Theorem~\ref{thm:weak-bound-connected-graph} for $\mathbb{Z}_p$-antimagic labelings gives a uniform bound for all connected graphs. However, because of the edge-stability property, one may focus on the underlying spanning trees. Refined results may be obtainable if additional parameters, such as the maximum degree, are taken into account. In particular, one may ask for conditions under which the Combinatorial Nullstellensatz recovers bounds closer to those expected from
explicit $\mathbb{Z}_p$-antimagic constructions. 

\section{Tool and computational resource disclosure}\label{Disclosure}
ChatGPT was used to help draw figures and proofread the manuscript for grammar, spelling, and punctuation.

\section{Acknowledgments}
The authors gratefully acknowledge the welcoming atmosphere of the \textit{57th Southeastern International Conference on Combinatorics, Graph Theory \& Computing} (March 9--13, 2026, Florida Atlantic University, Boca Raton, FL, USA), where this collaboration began.

We also thank Professor Edinah Gnang for his helpful feedback on the manuscript.

\end{document}